\newtheorem{Thm}{Theorem}[section]
\newtheorem{Lem}[Thm]{Lemma}
\theoremstyle{definition}
\newtheorem{Rem}[Thm]{Remark}
\newtheorem{Exa}[Thm]{Example}
\begin{document}

\title{Singularity results for functional equations driven by linear fractional transformations}
\author{Kazuki Okamura\footnote{Graduate School of Mathematical Sciences, The University of Tokyo, Komaba 3-8-1, Meguro-ku, Tokyo 153-8914, Japan \, e-mail : \texttt{kazukio@ms.u-tokyo.ac.jp}}}
\date{}
\maketitle

\begin{abstract}
We  consider functional equations driven by linear fractional transformations,
which are special cases of de Rham's functional equations.
We consider Hausdorff dimension of the measure whose distribution function is the solution. 
We give a necessary and sufficient condition for singularity. 
We also show that they have a relationship with stationary measures. 
\end{abstract}

\section{Introduction}

De Rham \cite{de} considered the following functional equation. 
\begin{equation}
	f(x) = \begin{cases}
						F_{0}(f(2x)) & 0 \leq x \leq 1/2 \\
						F_{1}(f(2x-1)) & 1/2  \leq x \leq 1. \tag{1.1}
				          \end{cases}
\end{equation}

He showed that there exists a unique, continuous and strictly increasing solution $f$ of (1.1), 
if $F_{0}$ and $F_{1}$ are strictly increasing contractions on $[0,1]$ such that 
$0 = F_{0}(0) < F_{0}(1) = F_{1}(0) < F_{1}(1) = 1$.

Let $\mu_{p}$, $p \in (0,1)$,  be the probability measure on $\{0,1\}$ with $\mu_{p}(\{0\}) = p$ and $\mu_{p}(\{1\}) = 1-p$.
Let $\mu_{p}^{\otimes\mathbb{N}}$ be the infinite product measure of $\mu_{p}$ on $\{0,1\}^{\mathbb{N}}$.
Let $\varphi : \{0,1\}^{\mathbb{N}} \to [0,1]$  be a function defined by $\varphi((x_{n})_{n}) = \sum_{n=1}^{\infty} x_{n}/2^{n}$.
Let $f_{p}$ be the distribution function of the image measure of $\mu_{p}^{\otimes\mathbb{N}}$ by  $\varphi$.
We see that $f_{p}$ is a singular function on $[0,1]$ if $p \ne 1/2$ and $f_{1/2}(x) = x$. 
$f_{p}$ is called Lebesgue's singular function if $p \ne 1/2$.
De Rham \cite{de} studied $f_{p}$ as a solution of (1.1) for $F_{0}(x) = px$ and $F_{1}(x) = (1-p)x + p$. 
This is a typical example of (1.1).  
We treat this case in Example 5.1.

In the above case, 
both $F_{0}$ and $F_{1}$ are  affine maps on $\mathbb{R}$. 
It is natural to consider singularities for the solution of (1.1) for more general $F_{0}, F_{1}$.
However, it is difficult to see singularities for general cases, 
because we do not see that what properties of $F_{0}$ and $F_{1}$ definitely affect  singularities. 
Some recent results concerning singularities are Berg and Kr\"uppel \cite{BK}, Kawamura \cite{Ka}, Kr\"uppel \cite{Kr}, Protasov \cite{Pr}. 
But results for general cases are scarce.

In this paper, 
we consider the equation (1.1) under the assumption that both $F_{0}$ and $F_{1}$ are linear fractional transformations. 
Let $\Phi(A ; z) = \dfrac{az + b}{cz + d}$ for a $2 \times 2$ real matrix $A = \begin{pmatrix} a & b  \\ c & d \end{pmatrix}$ and $z \in \mathbb{R}$.
Let $F_{i}(x) = \Phi(A_{i} ; x)$, $x \in [0,1]$, $i = 0,1$,
such that $2 \times 2$ real matrices $A_{i} = \begin{pmatrix} a_{i} & b_{i}  \\ c_{i} & d_{i} \end{pmatrix}$, $i = 0,1$,  satisfy the following conditions (A1) - (A3).\\
(A1) $0 = b_{0} < \dfrac{a_{0} + b_{0}}{c_{0} + d_{0}} = \dfrac{b_{1}}{d_{1}} < \dfrac{a_{1} + b_{1}}{c_{1} + d_{1}} = 1$.\\
(A2) $a_{i}d_{i} - b_{i}c_{i} > 0$, $i = 0,1$.\\
(A3) $(a_{i}d_{i} - b_{i}c_{i})^{1/2} < \min\{d_{i}, c_{i}+d_{i}\}$, $i = 0,1$.

The conditions (A1) - (A3) guarantee that  $F_{i} := \Phi(A_{i} ; \cdot)$, $i = 0,1$,  satisfy de Rham's conditions.
Let us denote $\mu_{f}$ be the probability measure such that the solution $f$ of (1.1) is the distribution function of $\mu_{f}$.

Let $\alpha = \min\{0, c_{0}/(d_{0}-a_{0}), c_{1}/b_{1}\}$, $\beta = \max \{0, c_{0}/(d_{0}-a_{0}), c_{1}/b_{1} \}$ and $\gamma = 1/\Phi(A_{0} ; 1)  > 1$.
Let $p_{0}(x) = (x+1)/(x+\gamma)$ and $p_{1}(x) = 1 - p_{0}(x)$ for $x > -\gamma$.
Let $s(p) = -p \log p - (1-p) \log (1-p)$ for $p \in [0,1]$.
We denote the $s$-dimensional Hausdorff measure, $s \in (0,1]$, of $E \subset \mathbb{R}$ by  $H_{s}(E)$ and the Hausdorff dimension of $E$ by $\dim_{H}(E)$.

The following theorems are main results in this paper.
 
\begin{Thm}
$(1)$ There exists a Borel set $K_{0}$ such that $\mu_{f}(K_{0}) = 1$ and $\dim_{H}(K_{0}) \leq  \max \{ s(p_{0}(y)) ; y \in [\alpha, \beta] \}/ \log 2$.\\
$(2)$ We have that $\mu_{f}(K) = 0$ for any Borel set $K$ with $\dim_{H}(K) < \min \{ s(p_{0}(y)) ; y \in [\alpha, \beta] \}/\log 2$.
\end{Thm}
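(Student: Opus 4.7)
The plan is to express $\mu_f$ via a symbolic coding, identify the mass of each dyadic cylinder as a product of conditional probabilities evolving according to an auxiliary dynamical system on $[\alpha,\beta]$, and then derive the dimension bounds from Billingsley-type density theorems.

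First I would set up the coding. For $\omega=(x_n)\in\{0,1\}^{\mathbb{N}}$ and $x=\sum_n x_n/2^n$, iterating (1.1) gives $\mu_f([j/2^n,(j+1)/2^n]) = |F_{x_1}\circ\cdots\circ F_{x_n}([0,1])|$, and the length of this image equals $\det(M_n)/(d_n(c_n+d_n))$, where $M_n=A_{x_1}\cdots A_{x_n}$ has entries $a_n,b_n,c_n,d_n$. I would then introduce the auxiliary parameter $y_n:=c_n/d_n$, which starts at $y_0=0$ and evolves by the recursion $y_{n+1}=G_{x_{n+1}}(y_n)$, where $G_i(y):=(a_i y + c_i)/(b_i y + d_i)$. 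The map $G_0$ is affine with fixed point $c_0/(d_0-a_0)$ and contraction ratio $a_0/d_0<1$ by~(A3); the map $G_1$ is M\"obius with attracting fixed point $c_1/b_1$ (using $a_1+b_1=c_1+d_1$ from~(A1) and $\det(A_1)/(c_1+d_1)^2<1$ from~(A3)). A short case analysis under (A1)--(A3) shows that $[\alpha,\beta]$ is forward invariant under both $G_i$, so $y_n\in[\alpha,\beta]$ for all $n$. A direct matrix computation, simplifying via the compatibility $b_1/d_1=a_0/(c_0+d_0)$, then yields the key identity
\[
\frac{\mu_f(\mathrm{cyl}_{n+1})}{\mu_f(\mathrm{cyl}_n)} \;=\; p_{x_{n+1}}(y_n),
\]
so that $-\log\mu_f(\mathrm{cyl}_n) = -\sum_{k=0}^{n-1}\log p_{x_{k+1}}(y_k)$ along each orbit.

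The above identity endows $\mu_f$ with a Markov structure in the coding: conditionally on the history up to time $k$ (equivalently on $y_k$), the next symbol $x_{k+1}$ equals $i\in\{0,1\}$ with probability exactly $p_i(y_k)$. Hence $\mathbb{E}[-\log p_{x_{k+1}}(y_k)\mid y_k]=s(p_0(y_k))$, and the increments $D_k:=-\log p_{x_{k+1}}(y_k)-s(p_0(y_k))$ form a martingale-difference sequence, uniformly bounded because (A3) keeps $p_i$ away from $0$ and $1$ on $[\alpha,\beta]$. Azuma--Hoeffding yields $\frac{1}{n}\sum_{k=0}^{n-1}D_k\to 0$ $\mu_f$-a.s., and combining with the trivial bounds $m \le s(p_0(y_k))\le M$ for $m:=\min_{[\alpha,\beta]}s(p_0(\cdot))$ and $M:=\max_{[\alpha,\beta]}s(p_0(\cdot))$ gives
\[
\liminf_{n\to\infty}\frac{-\log\mu_f(\mathrm{cyl}_n)}{n}\ge m, \qquad \limsup_{n\to\infty}\frac{-\log\mu_f(\mathrm{cyl}_n)}{n}\le M
\]
$\mu_f$-a.s. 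Since $\mu_f(B(x,2^{-n}))$ and $\mu_f(\mathrm{cyl}_n(x))$ differ by a bounded multiplicative factor, these translate into $\overline d(\mu_f,x)\le M/\log 2$ and $\underline d(\mu_f,x)\ge m/\log 2$ for $\mu_f$-a.e.\ $x$.

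Finally, Part~(1) follows by setting $K_0:=\{x:\overline d(\mu_f,x)\le M/\log 2\}$, which has $\mu_f(K_0)=1$ and, by the standard Billingsley upper-bound lemma, satisfies $\dim_H K_0\le M/\log 2$. Part~(2) is a consequence of the mass-distribution principle: if some Borel $K$ has $\dim_H K<m/\log 2$ and $\mu_f(K)>0$, then restricting $\mu_f$ to $K$ would produce positive $\mu_f$-measure on which $\underline d(\mu_f,\cdot)<m/\log 2$, contradicting the a.s.\ lower bound. The main obstacle lies in the two algebraic inputs of the first step: verifying forward invariance of $[\alpha,\beta]$ under both $G_i$ and deriving the identity $\mu_f(\mathrm{cyl}_{n+1})/\mu_f(\mathrm{cyl}_n)=p_{x_{n+1}}(y_n)$. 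Both are bookkeeping with the matrix entries under (A1)--(A3), but a careful case analysis (according to the signs of $c_0$ and $c_1$, which determine whether $\alpha$ and $\beta$ coincide with $0$ or with the fixed points) is required; once these identities are in place, the dimension-theoretic conclusion proceeds along standard Billingsley lines.
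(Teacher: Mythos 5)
Your proposal follows essentially the same route as the paper: your $y_n=c_n/d_n$ with $y_{n+1}=G_{x_{n+1}}(y_n)$, $G_i=\Phi({}^t\!A_i;\cdot)$, is exactly the paper's $r_n/s_n$ (Lemma 2.1(2) and Lemma 3.1 give the key identity $R_{n+1}/R_n=p_{X_{n+1}}(r_n/s_n)$ and the forward invariance of $[\alpha,\beta]$), the martingale strong law for $-\log R_n-L_n$ is the paper's Lemma 2.3 (it uses an $L^2$ bound plus Doob's inequality where you invoke Azuma--Hoeffding; both work, and your boundedness claim for the increments is justified since $\alpha>-1$ keeps $p_0$ away from $0$ and $1$ on $[\alpha,\beta]$), and the dimension conclusion is the paper's Lemma 2.4.

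The one step you should repair is the assertion that $\mu_f(B(x,2^{-n}))$ and $\mu_f(I_n(x))$ ``differ by a bounded multiplicative factor.'' The inequality $\mu_f(I_n(x))\le\mu_f(B(x,2^{-n}))$ is all you need for part (1), but the reverse comparison, which your part (2) uses to pass from cylinder decay to $\underline d(\mu_f,x)\ge m/\log 2$, is false in general: a ball of radius $2^{-n}$ meets up to three level-$n$ dyadic intervals whose $\mu_f$-masses need not be comparable. The fix is to work with the dyadic-net form of Billingsley's lemma throughout, i.e.\ run the mass-distribution argument directly on the cylinder estimates; this is precisely what the paper's Lemma 2.4(2) does, covering each set $U(n,l)$ of a Hausdorff covering by at most $3$ dyadic intervals of comparable length and summing. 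With that substitution your argument is complete and coincides with the paper's.
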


\begin{Thm} 
$(1)$ If both 
$(i)$ $(c_{0} + d_{0} - 2a_{0})(d_{0} - a_{0}) =  a_{0}c_{0}$, 
and $(ii)$ $(a_{1} - 2c_{1})(d_{1} - 2b_{1}) =  b_{1}c_{1}$
are satisfied, 
then $\mu_{f}(dx) = (1+2c_{0})/(-2c_{0}x+1+2c_{0})^{2} dx$.
In particular, $\mu_{f}$ is absolutely continuous.\\
$(2)$ If either $(i)$ or $(ii)$ fails, 
then there exists a Borel set $K_{1}$ such that  $\mu_{f}(K_{1}) = 1$ and $\dim_{H}(K_{1}) < 1$.
In particular, $\mu_{f}$ is singular.
\end{Thm}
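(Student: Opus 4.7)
\noindent The two parts require rather different strategies: part (1) is a direct verification, while part (2) combines Theorem 1.1(1) with a dichotomy argument for the residual case.

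For part (1), I would integrate the proposed density to obtain the candidate CDF. Under the boundary conditions $f(0)=0$, $f(1)=1$, the antiderivative is the M\"obius map $f(x)=x/(1+2c_0(1-x))$. Substituting this $f$ into the first branch of (1.1), both $F_0(f(2x))$ and $f(x)$ become quotients of affine functions of $x$; equating numerators gives a polynomial identity whose constant and linear coefficients vanish exactly when $d_0=2a_0$. In scaling-invariant form (with $r=c_0/a_0$, $s=d_0/a_0$) condition (i) factors as $(s-2)(r+s-1)=0$, and $r+s=1$ is ruled out by $F_0(1)<1$; so (i) is precisely the required constraint. The analogous calculation on $[1/2,1]$, using $a_1+b_1=c_1+d_1$ from (A1), reduces the second branch to $a_1=2c_1+b_1$, which is (ii) (the spurious root $a_1=c_1$ is excluded by (A2)). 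By the uniqueness half of de Rham's theorem, this $f$ is the solution, and differentiating recovers the stated density.

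For part (2), my starting point is Theorem 1.1(1). Since $p_0(y)=(y+1)/(y+\gamma)$ is strictly increasing with $p_0(\gamma-2)=1/2$ and $s$ attains its unique maximum $\log 2$ only at $1/2$,
\[
\max_{y\in[\alpha,\beta]}s(p_0(y))<\log 2 \iff \gamma-2\notin[\alpha,\beta].
\]
Direct computation gives $p_0(0)=1/\gamma$, $p_0(c_0/(d_0-a_0))=a_0/d_0$, and $p_0(c_1/b_1)=(c_1+b_1)/(a_1+b_1)$, each equal to $1/2$ precisely when $\gamma=2$, when (i) holds, and when (ii) holds, respectively. Hence in every configuration where failure of (i) or (ii) forces $\gamma-2$ outside $[\alpha,\beta]$, Theorem 1.1(1) immediately yields a $K_1$ with $\dim_H K_1<1$.

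The hard part will be the residual configuration in which (i) or (ii) fails yet $\gamma-2$ is an interior point of $[\alpha,\beta]$ strictly between two of its defining values; there Theorem 1.1(1) gives only the trivial $\dim_H K_0\leq 1$. I would dispose of this by a zero--one dichotomy: $\mu_f$ is the law of $\lim_n F_{\xi_1}\circ\cdots\circ F_{\xi_n}(x_0)$ for i.i.d.\ Bernoulli $(\xi_n)$, so its Lebesgue decomposition is tail-trivial and $\mu_f$ is either purely absolutely continuous or purely singular. Since part (1) characterises the only absolutely continuous candidate consistent with (1.1) and it solves (1.1) only when both (i) and (ii) hold, failure of either forces $\mu_f$ to be singular, and a set $K_1$ with $\dim_H K_1<1$ can then be extracted by sharpening the dimension estimate underlying Theorem 1.1 to this regime.
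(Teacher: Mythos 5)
Your part (1) is essentially the paper's argument: write down the candidate M\"obius map $f(x)=x/(-2c_0x+1+2c_0)$, check that it solves (1.1) exactly when (i) and (ii) hold (the paper does this after normalizing $d_0=b_1=1$), and invoke de Rham's uniqueness. That part is fine.

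Part (2) has a genuine gap, and it sits exactly where you locate ``the hard part''. Your easy case ($\gamma-2\notin[\alpha,\beta]$, so that Theorem 1.1(1) applies directly) covers only some configurations: if, say, (i) holds and (ii) fails, then $c_0/(d_0-a_0)=\gamma-2\in[\alpha,\beta]$ automatically, and even when both fail, $\gamma-2$ can lie strictly between the three defining points $0$, $c_0/(d_0-a_0)$, $c_1/b_1$. For this residual case your proposal does not close. First, the purity dichotomy plus part (1) does not yield singularity: part (1) only shows that one specific M\"obius candidate solves (1.1) iff (i) and (ii) hold; it does not show that an absolutely continuous solution would have to be of that form, so ``failure of (i) or (ii) forces singularity'' is not established. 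Second, and more seriously, even pure singularity is strictly weaker than what the theorem asserts, namely a carrier $K_1$ with $\dim_H(K_1)<1$; your final sentence defers precisely this to an unspecified ``sharpening''. The paper's actual mechanism (Lemma 3.3) is the missing idea: failure of (i) means $\gamma-2$ is not the fixed point of $\Phi({}^t\!A_0;\cdot)$, so some $\epsilon_0$-neighborhood of $\gamma-2$ is expelled by $\Phi({}^t\!A_0;\cdot)$; hence $r_n(x)/s_n(x)$ can sit near $\gamma-2$ (the unique point where $s(p_0(\cdot))=\log 2$) at time $n$ only if $X_n(x)=1$ or the orbit has just re-entered the neighborhood. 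Combined with Lemma 3.2 (the lower density of $0$-digits is at least $p_0(\alpha)>0$, via Azuma's inequality for the submartingale $\sum_n\bigl(1_{\{0\}}(X_n)-p_0(\alpha)\bigr)$), this bounds $\limsup_N N^{-1}\sum_{n\le N}s(p_0(r_n/s_n))$ by $\log 2-(\log 2-e_0)p_0(\alpha)/2<\log 2$ almost surely, and Lemma 2.4(1) then produces the required $K_1$; the case where (ii) fails is symmetric. Without this (or an equivalent) quantitative argument, part (2) is not proved.
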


We remark that singularity is robust as a function of $a_{i}, b_{i}, c_{i}, d_{i}$, $i=0,1$,
on the other hand,
absolute continuity is not robust. 

This paper is organized as follows. 
In section 2, we state some lemmas.
In section 3, we show the main results.
In section 4, we state a relationship between these functional equations and stationary measures.
In section 5, we give examples and remarks.

\textbf{Acknowledgements.} \, The author wishes to express his gratitude to his adviser Professor Shigeo Kusuoka for his encouragement. 
The author also wishes to express his gratitude to the referee for his or her comments.


\section{Lemmas}

First, we introduce some notation.

Let $X_{n} : [0,1) \to \{0,1\}$ , $n \geq 1$ be given by $X_{n}(x) = [2^{n}x] - 2[2^{n-1}x]$, $x \in [0,1)$.
Let $\rho_{n}(i_{1}, \dots , i_{n}) = \mu_{f}(\{X_{j} = i_{j}, 1 \leq j \leq n\})$ for $n \geq 1$, $i_{1}, \dots , i_{n} \in \{0,1\}$
and $R_{n}(x) = \rho_{n}(X_{1}(x), \dots , X_{n}(x))$ for $n \geq 1$ and $x \in [0,1)$. 
Let \\
$I_{n}(x) = \left[\sum_{i=1}^{n} 2^{-j} X_{j}(x), \sum_{i=1}^{n} 2^{-j} X_{j}(x) + 2^{-n}\right) = \left[2^{-n}[2^{n}x], 2^{-n}([2^{n}x]+1)\right)$.
Then, $x \in I_{n}(x)$, $x \in [0,1)$, 
and, $X_{n}(y) = X_{n}(x)$ and $I_{n}(y) = I_{n}(x)$ for $y \in I_{n}(x)$.
We have that 
$R_{n}(x) = \mu_{f}\left(\left\{X_{j} = X_{j}(x), 1 \leq j \leq n\right\}\right) = \mu_{f}\left(I_{n}(x)\right)$.
Let \[ \begin{pmatrix} p_{n}(x) & q_{n}(x)  \\ r_{n}(x) & s_{n}(x) \end{pmatrix}  = A_{X_{1}(x)} \cdots A_{X_{n}(x)}, x \in [0,1). \]

\begin{Lem}
Let $n \geq 1$ and $i_{1}, \dots, i_{n} \in \{0,1\}$.
Then we have the following.\\ 
$(1)$ $f\left(\sum_{i=1}^{n} 2^{-j} i_{j}\right) = \Phi(A_{i_{1}} \cdots A_{i_{n}}; 0)$ and 
$f\left(\sum_{i=1}^{n} 2^{-j} i_{j} + 2^{-n}\right) = \Phi(A_{i_{1}} \cdots A_{i_{n}}; 1)$.\\
$(2)$ $R_{n+1}(x)/R_{n}(x) = p_{X_{n+1}(x)}(r_{n}(x)/s_{n}(x))$.
\end{Lem}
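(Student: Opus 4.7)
The plan is to prove part (1) by induction on $n$, then use part (1) to derive part (2) via an explicit formula for $R_{n}(x)$ in terms of the entries of $M_{n}(x) := A_{X_{1}(x)} \cdots A_{X_{n}(x)}$, followed by algebraic manipulation of the ratio.

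For part (1), the key input is the composition law for linear fractional transformations, $\Phi(AB;z) = \Phi(A; \Phi(B;z))$, which is a standard property of the action of $GL_{2}(\mathbb{R})$ on the real projective line. The base case $n=1$ comes from the functional equation (1.1) applied at $x = 0$ and $x = 1/2$: one has $f(i_{1}/2) = F_{i_{1}}(f(0)) = F_{i_{1}}(0) = \Phi(A_{i_{1}};0)$ and $f(i_{1}/2 + 1/2) = F_{i_{1}}(f(1)) = \Phi(A_{i_{1}};1)$. For the inductive step, I would write $\sum_{j=1}^{n+1} 2^{-j}i_{j} = i_{1}/2 + (1/2)\sum_{k=1}^{n} 2^{-k}i_{k+1}$ so that this point lies in $[i_{1}/2,(i_{1}+1)/2]$, apply (1.1), then invoke the inductive hypothesis on $\sum_{k=1}^{n} 2^{-k}i_{k+1}$ and combine via the composition law.

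For part (2), I would first note that $I_{n}(x)$ has left endpoint $\sum_{j=1}^{n}2^{-j}X_{j}(x)$ and right endpoint obtained by adding $2^{-n}$, so by part (1),
\[
R_{n}(x) = f(\text{right}) - f(\text{left}) = \Phi(M_{n}(x);1) - \Phi(M_{n}(x);0) = \frac{p_{n}+q_{n}}{r_{n}+s_{n}} - \frac{q_{n}}{s_{n}} = \frac{\det M_{n}(x)}{s_{n}(x)(r_{n}(x)+s_{n}(x))}.
\]
Using $\det M_{n+1}(x) = \det M_{n}(x) \cdot \det A_{X_{n+1}(x)}$ and the matrix product giving $r_{n+1},s_{n+1}$ in terms of $r_{n},s_{n}$ and the entries of $A_{X_{n+1}(x)}$, I can form the ratio $R_{n+1}(x)/R_{n}(x)$ and cancel $\det M_{n}(x)$.

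It remains to check, case by case on $i = X_{n+1}(x) \in \{0,1\}$, that this ratio coincides with $p_{i}(r_{n}(x)/s_{n}(x)) = (r_{n}/s_{n}+1)/(r_{n}/s_{n}+\gamma)$ or its complement. For $i=0$, the condition $b_{0}=0$ from (A1) gives $s_{n+1}=d_{0}s_{n}$ and $\det A_{0}=a_{0}d_{0}$, and $\gamma = (c_{0}+d_{0})/a_{0}$, so the calculation collapses immediately. The subtler case is $i=1$: there the required identity reduces to $(d_{1}-b_{1})(c_{1}+d_{1}) = a_{1}d_{1}-b_{1}c_{1}$, which is exactly what (A1) guarantees via $a_{1}+b_{1} = c_{1}+d_{1}$ (equivalently $\gamma = d_{1}/b_{1}$, using the common-endpoint condition $\Phi(A_{0};1) = b_{1}/d_{1}$). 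This algebraic verification for $i=1$ is the only non-mechanical step; everything else is bookkeeping.
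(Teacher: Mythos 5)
Your proposal is correct and follows essentially the same route as the paper: induction via (1.1) and the composition law for part (1), then the formula $R_{n}(x)=\det M_{n}(x)/\bigl(s_{n}(x)(r_{n}(x)+s_{n}(x))\bigr)$ and a ratio computation for part (2). You are in fact more explicit than the paper about the final verification, correctly isolating the identity $a_{1}d_{1}-b_{1}c_{1}=(d_{1}-b_{1})(c_{1}+d_{1})$ coming from $a_{1}+b_{1}=c_{1}+d_{1}$, which the paper compresses into ``by computation'' and ``by noting (A2)''.
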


\begin{proof}
(1) By recalling (1.1), we easily show the assertion by induction in $n$.

(2) By the assertion (1), we have that  
\begin{align*}
R_{k}(x) 
&= \Phi(A_{X_{1}(x)} \cdots A_{X_{k}(x)}; 1) - \Phi(A_{X_{1}(x)} \cdots A_{X_{k}(x)}; 0) \\
&= \frac{p_{k}(x)s_{k}(x)-q_{k}(x)r_{k}(x)}{s_{k}(x)(r_{k}(x)+s_{k}(x))}.
\end{align*}

By computation, 
we have that
\begin{multline*} \frac{R_{n+1}(x)}{R_{n}(x)} 
= \frac{(\det A_{X_{n+1}(x)}) s_{n}(x)}{b_{X_{n+1}(x)}r_{n}(x) + d_{X_{n+1}(x)}s_{n}(x)} \\
\times \frac{r_{n}(x)+s_{n}(x)}{\left(a_{X_{n+1}(x)} + b_{X_{n+1}(x)}\right)r_{n}(x) + \left(c_{X_{n+1}(x)} + d_{X_{n+1}(x)}\right)s_{n}(x)}. 
\end{multline*}

By noting (A2), we have that
\[ \frac{R_{n+1}(x)}{R_{n}(x)} = p_{X_{n+1}(x)}\left(\frac{r_{n}(x)}{s_{n}(x)}\right). \]


Thus we obtain the assertion (2).
\end{proof}

Now we state some properties of $\Phi({}^t \! A_{i};\cdot)$, $i=0,1$.

We remark that $\Phi({}^t \! A_{0};\cdot)$ (resp. $\Phi({}^t \! A_{1};\cdot)$) is well-defined and continuous on $\mathbb{R}$ (resp. $(-\gamma, \infty)$).

\begin{Lem}
$(1)$ $d_{0} > a_{0} > 0$, $b_{1}+c_{1} > 0$ and $\alpha > -1$.\\
$(2)$ $\Phi({}^t \! A_{0}; z) = z$ if and only if $z = c_{0}/(d_{0}-a_{0})$. \\
$(3)$ $\Phi({}^t \! A_{1}; z) = z$ if and only if $z = -1$ or $z = c_{1}/b_{1}$.
\end{Lem}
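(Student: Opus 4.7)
The plan is to unpack the assumptions (A1)–(A3) carefully, extract sign information on the entries $a_i,b_i,c_i,d_i$, and then solve the fixed-point equations $\Phi({}^t\!A_i;z)=z$ by direct algebra.

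For part (1), I would first observe that the inequalities in (A3) are only meaningful if $\min\{d_i,c_i+d_i\}>0$, so immediately $d_i>0$ and $c_i+d_i>0$ for $i=0,1$. Combined with (A2) this gives $a_0>0$ (since $b_0=0$ forces $a_0d_0>0$) and, using $b_1/d_1=a_0/(c_0+d_0)>0$ from (A1), also $b_1>0$. Squaring the first half of (A3) for $i=0$ yields $a_0d_0<d_0^2$, hence $a_0<d_0$, giving $d_0>a_0>0$. For $b_1+c_1>0$ the key identity is
\[
a_1 d_1 - b_1 c_1 = (d_1-b_1)(c_1+d_1),
\]
obtained by substituting $a_1=c_1+d_1-b_1$, which holds because $(a_1+b_1)/(c_1+d_1)=1$ from (A1). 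Using $\sqrt{a_1d_1-b_1c_1}<c_1+d_1$ from (A3), dividing by $c_1+d_1>0$ gives $d_1-b_1<c_1+d_1$, i.e., $b_1+c_1>0$. Finally, for $\alpha>-1$ I would check the two candidates: $c_0/(d_0-a_0)>-1$ reduces to $a_0<c_0+d_0$, which is the upper bound in (A1); and $c_1/b_1>-1$ is equivalent to $b_1+c_1>0$ since $b_1>0$.

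Parts (2) and (3) are then a straightforward algebraic solution of the fixed-point equation. Writing $\Phi({}^t\!A_i;z)=(a_iz+c_i)/(b_iz+d_i)$ and clearing denominators, one obtains
\[
b_i z^2 + (d_i-a_i)z - c_i = 0.
\]
For $i=0$ the coefficient $b_0=0$ collapses this to the linear equation $(d_0-a_0)z=c_0$, which by $d_0>a_0$ has the unique solution $z=c_0/(d_0-a_0)$. For $i=1$ I would verify by substitution that $z=-1$ satisfies the quadratic (the sum of coefficients is $b_1-(d_1-a_1)-c_1=(a_1+b_1)-(c_1+d_1)=0$) and that $z=c_1/b_1$ satisfies it (after multiplying through by $b_1$, it becomes $c_1(c_1+d_1-a_1-b_1)=0$). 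Since $b_1>0$ the quadratic is genuine, so these exhaust the roots; they are distinct because $-1=c_1/b_1$ would force $b_1+c_1=0$, contradicting (1).

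The only non-routine step is the factorization $a_1d_1-b_1c_1=(d_1-b_1)(c_1+d_1)$ and its role in deducing $b_1+c_1>0$ from (A3); everything else is direct substitution together with positivity information already extracted from (A1)–(A3). Once that factorization is isolated, the proof writes itself.
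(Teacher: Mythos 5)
Your proof is correct and follows essentially the same route as the paper: positivity of the entries is extracted from (A1)--(A3), the key factorization $a_1d_1-b_1c_1=(d_1-b_1)(c_1+d_1)$ yields $b_1+c_1>0$, and the fixed points are found by solving $b_iz^2+(d_i-a_i)z-c_i=0$ (the paper factors this directly as $(-b_1z+c_1)(z+1)$ rather than verifying the two roots by substitution, but that is the same computation). One cosmetic quibble: the quantity $b_1-(d_1-a_1)-c_1$ you compute is the value of the quadratic at $z=-1$, not the ``sum of coefficients,'' though the algebra you wrote is exactly right.
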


\begin{proof}
(1) By (A2) and (A3), we have that $d_{0} > 0$, and then $a_{0} > 0$.
By (A3) and (A1), we have that $0 < (a_{0}d_{0})^{1/2}  = (a_{0}d_{0}-b_{0}c_{0})^{1/2} < d_{0}$ and then $0 < a_{0} < d_{0}$.

By (A1), we have that $a_{1}+b_{1} = c_{1}+d_{1}$ and then $a_{1}d_{1}-b_{1}c_{1} = (c_{1}+d_{1})(d_{1}-b_{1})$.
By (A2) and (A3), we have that $c_{1}+d_{1}>0$, and then $d_{1}-b_{1}>0$. 
By (A3), we have that $0 < (c_{1}+d_{1})^{1/2}(d_{1}-b_{1})^{1/2} < c_{1}+d_{1}$.
Hence we have that $d_{1}-b_{1} < c_{1}+d_{1}$, and then $b_{1}+c_{1}>0$.

By (A2) and (A3), we have that $d_{1}>0$.
By (A1), we have that $b_{1}>0$.
Since $b_{1}+c_{1}>0$, we see that $c_{1}/b_{1} > -1$.
Then, we have that $c_{0}/(d_{0}-a_{0}) > -1$ by noting (A1) and $a_{0} < d_{0}$.
Now we have that $a = \min \{0, c_{0}/(d_{0}-a_{0}), c_{1}/b_{1}\} > -1$.

(2) Since $b_{0} = 0$, we have that $\Phi({}^t \! A_{0}; z) - z = -(d_{0}-a_{0})z/d_{0} + c_{0}/d_{0}$.
Since $d_{0} > a_{0}$, we see that $\Phi({}^t \! A_{0}; z) = z$ if and only if $z = c_{0}/(d_{0}-a_{0})$.

(3) Since 
\begin{align*} 
\Phi({}^t \! A_{1} ; z) - z &= \frac{-b_{1}z^{2}-(d_{1}-a_{1})z+c_{1}}{b_{1}z+d_{1}} \\
&= \frac{(-b_{1}z+c_{1})(z+1)}{b_{1}z+d_{1}} = -\frac{(z+1)(z- c_{1}/b_{1})}{z+\gamma}, 
\end{align*}
we see that $\Phi({}^t \! A_{1}; z) = z$ if and only if $z = -1$ or $z = c_{1}/b_{1}$.
\end{proof}


Let $\mathcal{F}_{n} = \sigma(X_{1}, \dots, X_{n})$, $n \geq 1$. 
Let $L_{n} = \sum_{i = 1}^{n} E^{\mu_{f}}[- \log (R_{i}/R_{i-1}) | \mathcal{F}_{i-1}]$ and $M_{n} = - \log R_{n} - L_{n}$, $n \geq 1$. 
Then we have the following.
\begin{Lem} 
$(1)$ $L_{n+1}(x) - L_{n}(x) = s(p_{0}(r_{n}(x)/s_{n}(x)))$ for $\mu_{f}$-a.s.$x \in [0,1)$.\\
$(2)$ $M_{n}/n \to 0, \, (n \to \infty)$ for $\mu_{f}$-a.s.
\end{Lem}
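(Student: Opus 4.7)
The plan is to treat (1) as an explicit identification of the conditional entropy of $X_{n+1}$ given $\mathcal{F}_n$, and to deduce (2) from (1) via a standard martingale strong law under bounded increments.

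For (1), I would begin from
\[ L_{n+1}(x) - L_n(x) = E^{\mu_f}\!\left[-\log(R_{n+1}/R_n)\,\big|\,\mathcal{F}_n\right](x), \]
which is immediate from the definition of $L_n$. The key step is to identify the conditional distribution of $X_{n+1}$ given $\mathcal{F}_n$. Since $\mu_f(X_{n+1} = j \,|\,\mathcal{F}_n) = \mu_f(I_{n+1})/\mu_f(I_n) = R_{n+1}/R_n$ on the event $\{X_{n+1} = j\}$, Lemma 2.1(2) gives $\mu_f(X_{n+1} = j \,|\,\mathcal{F}_n) = p_j(r_n(x)/s_n(x))$. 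Since $r_n/s_n$ is $\mathcal{F}_n$-measurable, evaluating the conditional expectation yields
\[ -p_0(r_n/s_n)\log p_0(r_n/s_n) - p_1(r_n/s_n)\log p_1(r_n/s_n) = s(p_0(r_n/s_n)), \]
by the definition of $s$ and the relation $p_1 = 1 - p_0$.

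For (2), by construction $M_n$ is a mean-zero martingale with respect to $(\mathcal{F}_n)$, and by (1) its increments simplify to
\[ M_{n+1} - M_n = -\log p_{X_{n+1}}(r_n/s_n) - s(p_0(r_n/s_n)). \]
If these increments are uniformly bounded in $n$ and $x$, the conclusion $M_n/n \to 0$ $\mu_f$-a.s.\ follows from Azuma--Hoeffding combined with Borel--Cantelli, since $\mu_f(|M_n| > \varepsilon n) \leq 2\exp(-c_\varepsilon n)$ is then summable in $n$.

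The main obstacle is therefore to show that $r_n(x)/s_n(x)$ stays in a fixed compact subset of $(-1,\infty)$ disjoint from the pole $-\gamma$ of $p_0$, so that $p_0(r_n/s_n)$ and $p_1(r_n/s_n)$ are uniformly bounded away from $0$ and $1$. Here I would exploit the recursion $r_{n+1}/s_{n+1} = \Phi({}^t\!A_{X_{n+1}}; r_n/s_n)$, so that the orbit is driven by the IFS $\{\Phi({}^t\!A_0;\cdot),\Phi({}^t\!A_1;\cdot)\}$. Lemma 2.2 provides the attracting fixed points $c_0/(d_0-a_0)$ and $c_1/b_1$ of these maps, both lying in $[\alpha,\beta] \subset (-1,\infty)$ by Lemma 2.2(1). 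Condition (A3) translates into the derivative estimates $a_0/d_0 < 1$ and $(a_1 d_1 - b_1 c_1)/(c_1+d_1)^2 < 1$ at the respective fixed points, so both branches are strict contractions on a neighbourhood of $[\alpha,\beta]$. From this one extracts a common forward-invariant compact interval $K \subset (-1,\infty)\setminus\{-\gamma\}$ containing $c_{X_1}/d_{X_1}$ and every subsequent iterate; on $K$, both $p_0$ and $p_1$ are bounded away from $0$ and $1$, giving the required uniform bound on the martingale increments and completing the proof.
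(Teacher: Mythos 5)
Your part (1) is essentially the paper's argument: both proofs identify $E^{\mu_f}[-\log(R_{n+1}/R_n)\mid\mathcal{F}_n]$ by noting that $\mathcal{F}_n$ is atomic with atoms $I_n(x)$, invoking Lemma 2.1(2) to get $R_{n+1}/R_n=p_{X_{n+1}}(r_n/s_n)$, and summing to obtain the entropy $s(p_0(r_n/s_n))$; nothing to add there. For part (2) you take a genuinely different route. The paper does \emph{not} prove (or need) uniform boundedness of the martingale increments: it bounds the conditional second moments by $C_0=\sup_{p\in[0,1]}\bigl(p(\log p)^2+(1-p)(\log(1-p))^2\bigr)<\infty$, a bound that requires no information about where $r_n/s_n$ lives because $\tau(p)=p(\log p)^2+(1-p)(\log(1-p))^2$ remains bounded as $p\to0$ or $p\to1$, and then deduces $M_n/n\to0$ from Doob's submartingale inequality applied to $M_{2^l}^2$ along dyadic times together with a Borel--Cantelli argument. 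Your route via Azuma--Hoeffding instead needs the extra input that $r_n(x)/s_n(x)$ stays in a compact subinterval of $(-1,\infty)$ on which $p_0$ and $p_1$ are bounded away from $0$ and $1$; this is true and is precisely Lemma 3.1 of the paper (proved later, since it is needed for Theorem 1.1), so your proof is correct modulo importing that lemma. The trade-off is that the paper's argument is self-contained at this stage and would survive even without confinement of the orbit, while yours yields a stronger exponential concentration bound at the cost of the confinement result. One caveat on your justification of that confinement: the claim that (A3) makes both branches strict contractions on a neighbourhood of $[\alpha,\beta]$ is not correct in general, since $\Phi({}^t\!A_1;\cdot)$ has a repelling fixed point at $z=-1$ (its derivative there equals $(c_1+d_1)/(d_1-b_1)>1$ by Lemma 2.2(1)), so the map can be expanding on part of $[\alpha,\beta]$ when $\alpha$ is near $-1$. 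What is actually needed, and what the paper's Lemma 3.1 establishes, is only forward-invariance of $[\alpha,\beta]$, which follows from the monotonicity of $\Phi({}^t\!A_i;\cdot)$ combined with the sign of $\Phi({}^t\!A_i;z)-z$ computed in Lemma 2.2, with no contraction required.
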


\begin{proof}
(1) It is sufficient to show that for any $x \in [0,1)$, 
\[ \int_{I_{n}(x)} s\left(p_{0}\left(\frac{r_{n}(y)}{s_{n}(y)}\right)\right) \mu_{f}(dy) =  \int_{I_{n}(x)} -\log \left(\frac{R_{n+1}(y)}{R_{n}(y)}\right) \mu_{f}(dy). \]

Since $r_{n}(y)/s_{n}(y) = r_{n}(x)/s_{n}(x)$ for $y \in I_{n}(x)$, we see that
\[ \int_{I_{n}(x)} s\left(p_{0}\left(\frac{r_{n}(y)}{s_{n}(y)}\right)\right) \mu_{f}(dy) =  \mu_{f}(I_{n}(x))  s\left(p_{0}\left(\frac{r_{n}(x)}{s_{n}(x)}\right)\right). \]

By Lemma 2.1(2), we see that 
\[ - \log \frac{\mu_{f}(I_{n+1}(y))}{\mu_{f}(I_{n}(y))} = - \log \frac{R_{n+1}(y)}{R_{n}(y)} = -\log p_{X_{n+1}(y)}\left(\frac{r_{n}(x)}{s_{n}(x)}\right), \]
and, 
\[ \int_{I_{n}(x)} -\log \left(\frac{R_{n+1}(y)}{R_{n}(y)}\right) \mu_{f}(dy) = \int_{I_{n}(x)} -\log \left(p_{X_{n+1}(y)}\left(\frac{r_{n}(y)}{s_{n}(y)}\right)\right) \mu_{f}(dy) \]
\[ = - \mu_{f}\left(I_{n}(x) \cap \{X_{n+1} = 0\}\right) \log p_{0}\left(\frac{r_{n}(x)}{s_{n}(x)}\right) - \mu_{f}\left(I_{n}(x) \cap \{X_{n+1} = 1\}\right) \log p_{1}\left(\frac{r_{n}(x)}{s_{n}(x)}\right)  \]
\[ = \mu_{f}\left(I_{n}(x)\right) s\left(p_{0}(r_{n}(x)/s_{n}(x))\right),\]
which implies the assertion (1).

(2)  By noting Jensen's inequality, we have that 
\begin{align*}
E^{\mu_{f}}\left[(M_{k}-M_{k-1})^{2}\right] &\leq 2  \left(E^{\mu_{f}}\left[(- \log R_{k}+ \log R_{k-1})^{2}\right]+E^{\mu_{f}}\left[(L_{k} - L_{k-1})^{2}\right] \right)\\
&\leq 4 E^{\mu_{f}}\left[(- \log R_{k}+ \log R_{k-1})^{2}\right].
\end{align*}

Let $C_{0} = \sup\left\{ x(\log x)^{2} + (1-x)(\log (1-x))^{2} : x \in [0,1]\right\} < +\infty$.
We will show that $E^{\mu_{f}}\left[(\log (R_{n+1}/R_{n}))^{2}\right] \leq  C_{0}$ for any $n \geq 1$. 

Let $\tau(p) = p (\log p)^{2} + (1-p) (\log (1-p))^{2}$ for $p \in [0,1]$.
We remark that $\tau(p) = \tau(1-p)$. 
Then we have that
\[ E^{\mu_{f}}\left[(- \log R_{n}+ \log R_{n-1})^{2}\right]  = \sum_{k=0}^{2^{n}-1}  \mu_{f}\left(I_{n}\left(\frac{k}{2^{n}}\right)\right) \left(\log \frac{R_{n}(k/2^{n})}{R_{n-1}(k/2^{n})}\right)^{2} \]
\begin{multline*} = \sum_{k=0}^{2^{n-1}-1} \mu_{f}\left(I_{n}\left(\frac{2k}{2^{n}}\right)\right) \left(\log \frac{R_{n}(2k/2^{n})}{R_{n-1}(2k/2^{n})}\right)^{2} \\
+ \mu_{f}\left(I_{n}\left(\frac{2k+1}{2^{n}}\right)\right) \left(\log \frac{R_{n}(2k+1/2^{n})}{R_{n-1}(2k+1/2^{n})}\right)^{2}.
\end{multline*}

By noting that $R_{n-1}(2k/2^{n}) = R_{n-1}(2k+1/2^{n}) = R_{n-1}(k/2^{n-1})$, $\mu_{f}\left(I_{n}(2k/2^{n})\right) = R_{n}(2k/2^{n})$ and $\mu_{f}\left(I_{n}(2k+1/2^{n})\right) = R_{n}(2k+1/2^{n})$, 
we have that
\[ E^{\mu_{f}}\left[ \left(\log \frac{R_{n}}{R_{n-1}}\right)^{2}\right] = \sum_{k=0}^{2^{n-1}-1} R_{n-1}\left(\frac{k}{2^{n-1}}\right) \tau \left( \frac{R_{n}(k/2^{n-1})}{R_{n-1}(k/2^{n-1})}\right)  \le C_{0}.\]

Thus we have that $\sup_{k \geq 1} E^{\mu_{f}}[(M_{k}-M_{k-1})^{2}] \leq 4C_{0} < +\infty$.
Since $\{M_{n}\}$ is an $\{\mathcal{F}_{n}\}$-martingale, 
$\{M_{n}^{2}\}$ is an $\{\mathcal{F}_{n}\}$-submartingale.
Noting that $M_{0} = 0$, we have that 
$E^{\mu_{f}}[M_{n}^{2}] = \sum_{k=1}^{n} E^{\mu_{f}}[(M_{k}-M_{k-1})^{2}]$.

By Doob's submartingale inequality, 
we have that
\[ \mu_{f}\left( \max_{1 \leq k \leq 2^{l}} M_{k}^{2} \geq \epsilon 4^{l}\right) 
\leq \frac{E^{\mu_{f}}[M_{2^{l}}^{2}]}{\epsilon 4^{l}} 
\leq \frac{4C_{0}}{\epsilon 2^{l}}, \, l \geq 1, \, \epsilon > 0. \]

Now we have that for $\mu_{f}$-a.s.$x$,
there exists $m = m(x)\in \mathbb{N}$ such that \\
$\max_{1 \leq k \leq 2^{l}} (M_{k}(x)/2^{l})^{2} \leq \epsilon$, $l \geq m$, and then, 
$(M_{n}(x)/n)^{2} \leq 4\epsilon$, $n \geq 2^{m}$.
Then we see that $\limsup_{n \to \infty} (M_{n}/n)^{2} \leq \epsilon$, $\mu_{f}$-a.s.,  which implies our assertion.
\end{proof}

\begin{Lem}
$(1)$ Suppose that  $\limsup_{n \to +\infty} (-\log R_{n})/n \leq \theta_{1}$ for a constant $\theta_{1}$, 
then there exists a Borel set $K_{0}$ such that $\mu_{f}(K_{0}) = 1$ and $\dim_{H}(K_{0}) \leq  \theta_{1}/\log 2$.\\
$(2)$ Suppose that $\liminf_{n \to +\infty} (-\log R_{n})/n \geq \theta_{2}$ for a constant $\theta_{2}$, 
then we have that $\mu_{f}(K) = 0$ for any Borel set $K$ with $\dim_{H}(K) < \theta_{2}/\log 2$.
\end{Lem}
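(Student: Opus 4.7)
The plan is to treat both parts as standard consequences of the identity $R_n(x) = \mu_f(I_n(x))$, where $I_n(x)$ is the dyadic interval of length $2^{-n}$ containing $x$. Under this identification, the hypotheses are upper/lower bounds on the measure of the dyadic interval containing $x$ at scale $2^{-n}$, and the factor $1/\log 2$ in the conclusion comes from converting the natural logarithm in $-\log R_n$ to the base-$2$ scale of $|I_n(x)| = 2^{-n}$.

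For $(1)$, I would take $K_0 = \{x : \limsup_n (-\log R_n(x))/n \leq \theta_1\}$, which has full $\mu_f$-measure by hypothesis, and decompose it as the monotone union $K_0 = \bigcup_N K_0^{N,\epsilon}$ for fixed $\epsilon > 0$, where $K_0^{N,\epsilon} = \{x : -\log R_n(x) \leq n(\theta_1+\epsilon) \text{ for all } n\geq N\}$. Each level-$n$ dyadic interval that meets $K_0^{N,\epsilon}$ has $\mu_f$-measure at least $e^{-n(\theta_1+\epsilon)}$, so there are at most $e^{n(\theta_1+\epsilon)}$ such intervals, and they provide a cover. For $s > (\theta_1+\epsilon)/\log 2$ this yields a Hausdorff $s$-pre-measure bound $e^{n(\theta_1+\epsilon)} \cdot 2^{-ns} \to 0$, hence $\dim_H K_0^{N,\epsilon} \leq (\theta_1+\epsilon)/\log 2$. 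Taking $N\to\infty$ and then $\epsilon \downarrow 0$ gives $\dim_H K_0 \leq \theta_1/\log 2$. This part is routine.

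Part $(2)$ I would prove by a mass distribution principle. Fix $\epsilon > 0$ with $\dim_H K < (\theta_2-\epsilon)/\log 2$, set $\alpha = (\theta_2-\epsilon)/\log 2$, and let $A_N = \{x : -\log R_n(x) \geq n(\theta_2-\epsilon) \text{ for all } n\geq N\}$. By hypothesis $\mu_f(\bigcup_N A_N) = 1$, so it suffices to show $\mu_f(K\cap A_N) = 0$ for every $N$. The key local estimate is that for any interval $J$ with $|J| < 2^{-N}$ that meets $A_N$, choosing $n$ with $2^{-n-1} \leq |J| < 2^{-n}$ shows that $J$ meets at most two level-$n$ dyadic intervals, each of $\mu_f$-measure at most $2^{-n\alpha}$, so $\mu_f(J \cap A_N) \leq 2^{1+\alpha}|J|^\alpha$.

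To conclude, I would choose $s'' \in (\dim_H K, \alpha)$, so that $H_{s''}(K) = 0$. For arbitrary $\eta, \delta > 0$ with $\delta < 2^{-N}$, cover $K$ by a $\delta$-cover $\{J_k\}$ with $\sum |J_k|^{s''} < \eta$; summing the local estimate over the cover yields $\mu_f(K\cap A_N) \leq 2^{1+\alpha}\sum |J_k|^\alpha \leq 2^{1+\alpha}\delta^{\alpha - s''}\eta \to 0$. The main obstacle is the bookkeeping in $(2)$: one has to pass from dyadic intervals, where the hypothesis supplies a measure bound, to arbitrary intervals used in the definition of Hausdorff measure, and to stratify by $A_N$ because the pointwise lower bound on $-\log R_n$ holds only eventually rather than uniformly.
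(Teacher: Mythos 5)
Your proposal is correct and follows essentially the same route as the paper: in part (1) you count the dyadic intervals of measure at least $e^{-n(\theta_1+\epsilon)}$ to build an efficient cover of the eventuality sets, and in part (2) you compare an arbitrary fine cover with dyadic intervals via the measure decay bound, stratified by the sets where the bound holds from level $N$ on. The only cosmetic differences are that you match each cover element $J$ to the dyadic level with $2^{-n-1}\leq |J|<2^{-n}$ (getting at most $2$ dyadic intervals per $J$ instead of the paper's $3$) and introduce an auxiliary exponent $s''$ where the paper bounds directly by $3\delta$.
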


\begin{proof}
We denote the diameter of a set $G \subset \mathbb{R}$ by $\textrm{diam}(G)$.

(1)  Let $Y_{\epsilon, n} = \bigcap_{k \geq n} \left\{ (-\log R_{k})/k \leq \theta_{1} + \epsilon\right\}$.
Then we have that $\mu_{f}\left(\bigcup_{n \geq 1} Y_{\epsilon, n}\right) = 1$.
Let $\mathcal{A}_{\epsilon, k}$ be the set of $I_{k}(x)$, $x \in [0,1)$, such that $R_{k}(x) \geq \exp(-k(\theta_{1}+\epsilon))$.
Then, for any $k \geq n$, 
$\{I_{k}(x) \in \mathcal{A}_{\epsilon, k} : x \in Y_{\epsilon, n}\}$ is a $2^{-k}$-covering of $Y_{\epsilon, n}$. 

Since $\mu_{f}([0,1)) = 1$, 
we see that $\sharp(\mathcal{A}_{\epsilon, k}) \exp(-k(\theta_{1}+\epsilon)) \leq 1$.
Then
\[ \sum_{I \in \mathcal{A}_{\epsilon, k}} \textrm{diam}(I)^{(\theta_{1}+2\epsilon)/\log 2} = \sharp(\mathcal{A}_{\epsilon, k}) \exp \left(-k(\theta_{1}+2\epsilon)\right) \leq \exp (-k\epsilon). \]
By letting $k \to +\infty$, we see $H_{(\theta_{1}+2\epsilon)/\log 2}(Y_{\epsilon, n}) = 0$.

Let $K_{0} = \bigcap_{k \geq 1} \bigcup_{n \geq 1} Y_{1/k, n}$. 
Then, we have that $\mu_{f}(K_{0}) = 1$ and $H_{(\theta_{1}+2\epsilon)/\log 2}(K_{0}) = 0$ for any $\epsilon > 0$.
Hence $\dim_{H}(K_{0}) \leq  \theta_{1}/\log 2$.

(2) Let $K$ be a Borel set such that $\dim_{H}(K) < \theta_{2}/\log 2$.
Then, there exists $\epsilon > 0$ such that $H_{(\theta_{2}-\epsilon)/\log 2}(K) = 0$.
Then, for any $n \geq 1$ and $\delta > 0$, there exist intervals  $\{U(n,l)\}_{l = 1}^{\infty}$ on $[0,1)$ 
such that $K \subset \bigcup_{l \geq 1} U(n,l)$ and $\textrm{diam}(U(n,l)) < 2^{-n}$ for $l \geq 1$ and 
$\sum_{l \geq 1} \textrm{diam}(U(n,l))^{(\theta_{2}-\epsilon)/\log 2} \leq \delta$.
For each $l \geq 1$, let $k(n,l) > n$ be the integer such that $2^{-k(n,l)} \leq \textrm{diam}(U(n,l)) < 2^{-(k(n,l)-1)}$.

Let $Z_{\epsilon, n} = \bigcap_{k \geq n} \left\{(-\log R_{k})/k \geq \theta_{2} - \epsilon\right\}$.
Then we have that \\
$\lim_{n \to \infty} \mu_{f}(Z_{\epsilon, n}) = \mu_{f}\left(\bigcup_{n \geq 1} Z_{\epsilon, n}\right) = 1$, 
and, \[ \mu_{f}\left(I_{k(n,l)}(y)\right) =  R_{k(n,l)}(y) \leq \exp \left(-k(n,l)(\theta_{2}-\epsilon)\right) \leq \textrm{diam}(U(n,l))^{(\theta_{2}-\epsilon)/\log 2}, \] for $y \in Z_{\epsilon, n}$ and $l \geq 1$.

Since $\textrm{diam}(I_{k(n,l)}(x)) = 2^{-k(n,l)}$ and $\textrm{diam}(U(n,l)) < 2^{-(k(n,l)-1)}$, we see that 
$\sharp \left\{I_{k(n,l)}(x) ;  I_{k(n,l)}(x) \cap U(n,l)  \neq \emptyset \right\} \leq 3$ and that \\
$\mu_{f}\left(K \cap Z_{\epsilon,n} \cap U(n,l)\right) \leq 3 \textrm{diam}(U(n,l))^{(\theta_{2}-\epsilon)/\log 2}$.

Noting that $K \subset \bigcup_{l \geq 1} U(n,l)$, we see that 
\[ \mu_{f}(K \cap Z_{\epsilon,n}) \leq  \sum_{l \geq 1}\mu_{f}(K \cap Z_{\epsilon,n} \cap U(n,l)) \leq 3 \sum_{l \geq 1} \textrm{diam}(U(n,l))^{(\theta_{2}-\epsilon)/\log 2} \leq 3 \delta.\]

Since $\delta$ is taken arbitrarily, we see that $\mu_{f}(K \cap Z_{\epsilon,n}) = 0$.
Recalling $\mu_{f}\left(\bigcup_{n \geq 1} Z_{\epsilon, n}\right) = 1$, we see that $\mu_{f}(K) = 0$.
\end{proof}


\section{Proofs of Main Theorems}

\begin{Lem}
Let $n \geq 1$ and $i_{1}, \dots, i_{n} \in \{0,1\}$. 
Then, \\
$\alpha \leq \Phi({}^t \! A_{i_{n}} \cdots {}^t \! A_{i_{1}} ; \alpha) \leq \Phi ({}^t \! A_{i_{n}} \cdots {}^t \! A_{i_{1}} ; \beta) \leq \beta$. \\
In particular,
$r_{n}(x)/s_{n}(x) \in [\alpha, \beta]$ for $n \geq 1$ and $x \in [0,1)$.
\end{Lem}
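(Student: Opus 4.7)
The plan is to reduce the claim to showing that each of $\Phi({}^t\! A_0;\cdot)$ and $\Phi({}^t\! A_1;\cdot)$ is strictly increasing on $[\alpha,\beta]$ and maps this interval into itself, and then iterate. The derivative of $\Phi({}^t\! A_i;\cdot)$ is $(a_id_i-b_ic_i)/(b_iz+d_i)^2$, which is positive by (A2) wherever the map is defined; since $\alpha>-1>-\gamma$ by Lemma~2.2(1) together with $\gamma>1$, both maps are continuous and strictly increasing on $[\alpha,\beta]$. Consequently it suffices to verify the endpoint inequalities
\[ \alpha \le \Phi({}^t\! A_i;\alpha) \quad\text{and}\quad \Phi({}^t\! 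A_i;\beta) \le \beta, \qquad i=0,1. \]

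For $i=0$, since $b_0=0$, the map is affine with slope $a_0/d_0\in(0,1)$ (by Lemma~2.2(1)) and fixed point $c_0/(d_0-a_0)$, which lies in $[\alpha,\beta]$ by the very definition of $\alpha,\beta$; an affine contraction with fixed point in $[\alpha,\beta]$ maps $[\alpha,\beta]$ into itself, so all four endpoint inequalities follow. For $i=1$, I use the identity from the proof of Lemma~2.2(3),
\[ \Phi({}^t\! A_1;z)-z = -\frac{(z+1)(z-c_1/b_1)}{z+\gamma}. \]
Because $z+1>0$ and $z+\gamma>0$ on $[\alpha,\beta]$, the left-hand side has the same sign as $c_1/b_1-z$; since $\alpha\le c_1/b_1\le \beta$, this gives $\Phi({}^t\! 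A_1;\alpha)\ge\alpha$ and $\Phi({}^t\! A_1;\beta)\le\beta$. The complementary bounds $\Phi({}^t\! A_1;\alpha)\le\beta$ and $\Phi({}^t\! A_1;\beta)\ge\alpha$ then follow from monotonicity.

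With self-map and monotonicity of each $\Phi({}^t\! A_i;\cdot)$ on $[\alpha,\beta]$ in hand, the main inequality follows by induction on $n$: writing $T_k=\Phi({}^t\! A_{i_k}\cdots{}^t\! A_{i_1};\cdot)$ and using $T_n=\Phi({}^t\! A_{i_n};\cdot)\circ T_{n-1}$, if $\alpha\le T_{n-1}(\alpha)\le T_{n-1}(\beta)\le \beta$, then applying the monotone self-map $\Phi({}^t\! A_{i_n};\cdot)$ preserves both membership in $[\alpha,\beta]$ and the ordering. For the final assertion, transposing the relation $\begin{pmatrix}p_n&q_n\\ r_n&s_n\end{pmatrix}=A_{X_1(x)}\cdots A_{X_n(x)}$ yields
\[ \begin{pmatrix}p_n(x)&r_n(x)\\ q_n(x)&s_n(x)\end{pmatrix} = {}^t\! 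A_{X_n(x)}\cdots{}^t\! A_{X_1(x)}, \]
so that $r_n(x)/s_n(x)=\Phi({}^t\! A_{X_n(x)}\cdots{}^t\! A_{X_1(x)};0)$. Since $0\in[\alpha,\beta]$ by definition, monotonicity pinches this value between the images of $\alpha$ and $\beta$, which lie in $[\alpha,\beta]$ by the first part.

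The main point requiring care is that the pole $z=-\gamma$ of $\Phi({}^t\! A_1;\cdot)$ must never be encountered along the iteration; this is automatic from the single-step analysis, since $[\alpha,\beta]\subset(-\gamma,\infty)$ is invariant and therefore all intermediate values stay bounded away from $-\gamma$, keeping the composition identity used in the inductive step legitimate.
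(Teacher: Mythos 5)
Your proof is correct and follows essentially the same route as the paper: establish that $[\alpha,\beta]$ is invariant under each increasing map $\Phi({}^t\!A_i;\cdot)$ (using the fixed points $c_0/(d_0-a_0)$ and $c_1/b_1$ lying in $[\alpha,\beta]$ and the fact that $\alpha>-1>-\gamma$ keeps the pole away), then induct and evaluate at $0\in[\alpha,\beta]$. The only differences are cosmetic — you justify monotonicity via the derivative and the $i=0$ case via the affine-contraction picture, where the paper reads both directly off the sign of $\Phi({}^t\!A_i;z)-z$.
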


\begin{proof}
By noting Lemma 2.2, we have that $\Phi({}^t \! A_{0} ; z) - z = -(d_{0}-a_{0})z/d_{0} + c_{0}/d_{0}$ and $\Phi({}^t \! A_{1}; z) - z = -(z+1)(z- c_{1}/b_{1})/(z+\gamma)$.
We remark that $\alpha > -1 > -\gamma$.
Since $\alpha \leq c_{0}/(d_{0}-a_{0}),  c_{1}/b_{1} \leq \beta$, we see that $\alpha \leq \Phi({}^t \! A_{i}; \alpha) \leq \Phi({}^t \! A_{i}; \beta) \leq \beta$ for $i = 0,1$.

Since $\Phi({}^t \! A_{0}; \cdot)$ and $\Phi({}^t \! A_{1}; \cdot)$ are increasing, we obtain the assertion by induction in $n$.

We have that $\alpha \leq 0 \leq \beta$ by the definition of $\alpha$ and $\beta$.
Since $r_{n}(x)/s_{n}(x) = \Phi ({}^t \! A_{X_{n}(x)} \cdots {}^t \! A_{X_{1}(x)} ; 0)$, we see that $r_{n}(x)/s_{n}(x) \in [\alpha, \beta]$.  
\end{proof}

Now we show Theorem 1.1.

By noting Lemma 2.3 and Lemma 3.1, we see that for $\mu_{f}$-a.s., 
\begin{align*} 
\limsup_{n \to +\infty} \frac{-\log R_{n}}{n} = \limsup_{n \to \infty} \frac{L_{n}}{n} &=\limsup_{N \to \infty} \frac{1}{N} \sum_{n=1}^{N} s\left(p_{0}\left(\frac{r_{n}(x)}{s_{n}(x)}\right)\right)  \\
&\le \max \left\{ s(p_{0}(y)) ; y \in [\alpha, \beta] \right\}
\end{align*} 
, and, 
\begin{align*} 
\liminf_{n \to +\infty} \frac{-\log R_{n}}{n} = \liminf_{n \to \infty} \frac{L_{n}}{n} &=\liminf_{N \to \infty} \frac{1}{N} \sum_{n=1}^{N} s\left(p_{0}\left(\frac{r_{n}(x)}{s_{n}(x)}\right)\right)\\
&\ge \min \left\{s(p_{0}(y)) ; y \in [\alpha, \beta]\right\}.
\end{align*}

Let $\theta_{1} = \max \left\{ s(p_{0}(y)) ; y \in [\alpha, \beta] \right\}$ 
and $\theta_{2} = \min \left\{ s(p_{0}(y)) ; y \in [\alpha, \beta] \right\}$.
Then, by Lemma 2.4(1) (resp. (2)), we obtain the assertion (1) (resp. (2)).

These complete the proof of Theorem 1.1.

\begin{Lem}
Let $\mathbb{N}_{i}(x) = \{n \in \mathbb{N} : X_{n}(x) = i\}$ for $x \in [0,1)$, $i = 0,1$.
Then,  \[ \liminf_{N \to \infty} \frac{|\mathbb{N}_{0}(x) \cap \{1, \dots, N\}|}{N} \geq p_{0}(\alpha) > 0, \, \mu_{f}\textrm{-}a.s.x.\]
\end{Lem}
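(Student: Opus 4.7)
The plan is to realize the frequency of $0$'s as a Cesaro average of conditional probabilities, each of which is bounded below by $p_0(\alpha)$, modulo a martingale fluctuation that vanishes. First, I would record two elementary facts: $p_0(y) = (y+1)/(y+\gamma)$ is strictly increasing on $(-\gamma,\infty)$ because $p_0'(y) = (\gamma-1)/(y+\gamma)^2 > 0$ (using $\gamma > 1$), and $p_0(\alpha) > 0$ since $\alpha>-1$ by Lemma 2.2(1). This already takes care of the positivity $p_0(\alpha)>0$ claimed in the statement.

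Next, I would identify the one-step conditional probability. Lemma 2.1(2) gives $R_{n+1}/R_n = p_{X_{n+1}}(r_n/s_n)$, and since $r_n/s_n$ is $\mathcal{F}_n$-measurable the branching at level $n+1$ assigns the event $\{X_{n+1}=0\}$ conditional mass exactly $p_0(r_n/s_n)$. Thus
\[
\mathbb{E}^{\mu_f}\bigl[\mathbf{1}_{\{X_{n+1}=0\}}\bigm| \mathcal{F}_n\bigr](x) = p_0\bigl(r_n(x)/s_n(x)\bigr).
\]
By Lemma 3.1, $r_n(x)/s_n(x)\in[\alpha,\beta]$ for all $n\ge 1$ and all $x\in[0,1)$, so the monotonicity of $p_0$ delivers the deterministic lower bound $p_0(r_n(x)/s_n(x))\ge p_0(\alpha)$.

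I would then split the counting function into compensator plus martingale. Setting
\[
T_N(x) = \sum_{n=1}^{N}\Bigl(\mathbf{1}_{\{X_n=0\}}(x) - p_0\bigl(r_{n-1}(x)/s_{n-1}(x)\bigr)\Bigr)
\]
(with the convention $r_0=0$, $s_0=1$, so that $p_0(r_0/s_0)=1/\gamma$ matches $\mu_f([0,1/2))$), the process $\{T_N\}$ is an $\{\mathcal{F}_N\}$-martingale with $|T_N-T_{N-1}|\le 1$, hence $\sup_N \mathbb{E}^{\mu_f}[(T_N-T_{N-1})^2]\le 1$. The same Doob-inequality argument that appears in Lemma 2.3(2) then forces $T_N/N\to 0$ $\mu_f$-a.s. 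Dividing
\[
|\mathbb{N}_0(x)\cap\{1,\dots,N\}| = T_N(x) + \sum_{n=0}^{N-1} p_0\bigl(r_n(x)/s_n(x)\bigr)
\]
by $N$ and passing to $\liminf$ closes the argument: the martingale term disappears a.s., and every summand in the Cesaro average is at least $p_0(\alpha)$.

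The only step that requires any thought is the martingale strong law of large numbers, but since it is literally the same calculation as in Lemma 2.3(2)—in fact easier, because here the increments are bounded a priori by $1$—it poses no real obstacle. Everything else reduces to the monotonicity of $p_0$ and the invariance property already established in Lemma 3.1.
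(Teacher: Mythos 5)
Your proof is correct, and it shares the paper's central observation---namely that Lemma 2.1(2) identifies the one-step conditional probability $\mathbb{E}^{\mu_f}[\mathbf{1}_{\{X_{n+1}=0\}}\mid\mathcal{F}_n]=p_0(r_n/s_n)$, which Lemma 3.1 together with the monotonicity of $p_0$ bounds below by $p_0(\alpha)$---but it controls the fluctuation differently. The paper centers the counting sum at the \emph{constant} $p_0(\alpha)$, obtaining a bounded-increment submartingale, and invokes Azuma's inequality to get an exponential tail bound $\mu_f(\zeta_N<Ncp_0(\alpha))\leq\exp(-N(1-c)^2p_0(\alpha)^2/2(1+p_0(\alpha))^2)$, then lets $c\uparrow 1$. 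You instead subtract the exact Doob compensator $\sum_n p_0(r_{n-1}/s_{n-1})$, obtaining a true martingale with increments bounded by $1$, and recycle the $L^2$ maximal-inequality argument already written out in the proof of Lemma 2.3(2) to conclude $T_N/N\to 0$ a.s.; the deterministic lower bound on each compensator term then finishes the proof without any limiting step in $c$. Your route is slightly more self-contained (no appeal to the external reference for Azuma's inequality, and the machinery is literally already in the paper), at the cost of losing the explicit exponential concentration rate---which the lemma does not need. Both arguments are sound; the only point worth stating explicitly in your write-up is that $0\in[\alpha,\beta]$, so the $n=0$ compensator term $p_0(0)=1/\gamma$ is also at least $p_0(\alpha)$, which follows from the definition of $\alpha$ and $\beta$.
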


\begin{proof}
Let $\zeta_{N}(x) = |\mathbb{N}_{0}(x) \cap \{1, \dots, N\}|$.
Then, $\zeta_{N}(x) = \sum_{n=1}^{N} 1_{\{0\}}(X_{n}(x))$.
Let $M_{n} = \sum_{i=1}^{n}\left(1_{\{0\}}(X_{n})-p_{0}(\alpha) \right)$.
Then,  $\{M_{n}\}$ is an $\{\mathcal{F}_{n}\}$-submartingale because  
\[ E^{\mu_{f}}[M_{n+1}-M_{n} | \mathcal{F}_{n}] (x) = E^{\mu_{f}}[1_{\{0\}}(X_{n+1})-p_{0}(\alpha) | \mathcal{F}_{n}] (x) = p_{0}\left(\frac{r_{n}(x)}{s_{n}(x)}\right) - p_{0}(\alpha) \geq 0. \]

We remark that $|M_{n+1}-M_{n}| = |1_{\{0\}}(X_{n+1})-p_{0}(\alpha) | \leq 1+p_{0}(\alpha) $ for $\mu_{f}$-a.s..
By Azuma's inequality \cite{A}, we see that for $N \in \mathbb{N}$ and $0 < c < 1$, 
\[ \mu_{f}(\zeta_{N}< N c p_{0}(\alpha))  = \mu_{f}(M_{N}<-N (1-c) p_{0}(\alpha)) \leq \exp\left(-\frac{N(1-c)^{2}p_{0}(\alpha) ^{2}}{2(1+p_{0}(\alpha) )^{2}}\right). \]
Hence, for any $0 < c < 1$, $\liminf_{N \to \infty} \zeta_{N}/N \geq c p_{0}(\alpha)$ 
for $\mu_{f}$-a.s..
Thus we obtain the assertion.
\end{proof}

\begin{Lem}
We assume that the condition $(i)$ in Theorem 1.2 fails.
Then, \\
$(1)$ There exists $\epsilon_{0} \in (0, 2(\gamma-1))$ 
such that for any $z \in \mathbb{R}$ with $|z-(\gamma-2)| \leq  \epsilon_{0}$,
 $|\Phi({}^t \! A_{0}; z) - (\gamma-2)| > \epsilon_{0}$.

Let $A(x) = \left\{ n \in \mathbb{N} : \left|r_{n}(x)/s_{n}(x) - (\gamma-2)\right| \leq \epsilon_{0}\right\}$, $B(x) = \mathbb{N} \setminus A(x)$, $C(x) = \{n \in A(x) : n-1 \in B(x)\}$ and  $D(x) = B(x) \cup C(x)$. Then we have the following. \\
$(2)$ $\mathbb{N}_{0}(x) \subset D(x)$ for  $x \in [0,1)$.\\
$(3)$ $\liminf_{N \to \infty} |B(x) \cap \{1, \dots, N\}|/N \geq p_{0}(\alpha) /2$, $\mu_{f}$-a.s.$x$.\\
$(4)$ Let $e_{0} = s(p_{0}(\gamma-2+\epsilon_{0})) < \log 2$. Then, 
\[ \limsup_{N \to \infty} \frac{1}{N} \sum_{n=1}^{N} s\left(p_{0}\left(\frac{r_{n}(x)}{s_{n}(x)}\right)\right) \leq \log 2 - \frac{(\log 2 - e_{0})p_{0}(\alpha)}{2}, \, \, \mu_{f} \textrm{-a.s.} x.\]
\end{Lem}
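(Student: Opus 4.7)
The plan is to handle (1)--(4) in sequence. For (1), I would first identify condition (i) of Theorem 1.2 as exactly the fixed-point condition for $\Phi({}^t\!A_0; \cdot)$ at $\gamma-2$. Since $b_0=0$, Lemma 2.2 gives $\Phi({}^t\!A_0; z) = (a_0 z + c_0)/d_0$, an affine map with slope $a_0/d_0 \in (0,1)$. Writing $\gamma-2 = (c_0+d_0-2a_0)/a_0$ (from $\gamma = (c_0+d_0)/a_0$) and substituting into $\Phi({}^t\!A_0; z) = z$ reduces, after clearing denominators, to $(c_0+d_0-2a_0)(d_0-a_0) = a_0 c_0$, i.e., condition (i). Hence when (i) fails, $\delta := |\Phi({}^t\!A_0;\gamma-2)-(\gamma-2)| > 0$, and the reverse triangle inequality with the Lipschitz bound yields
\[ |\Phi({}^t\!A_0; z) - (\gamma-2)| \ge \delta - (a_0/d_0)\,|z-(\gamma-2)| \]
for every $z \in \mathbb{R}$. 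Any $\epsilon_0 \in (0, 2(\gamma-1))$ satisfying $\epsilon_0(1+a_0/d_0) < \delta$ (and also $\epsilon_0 < |\gamma-2|$ when $\gamma \neq 2$, to cover the $n=1$ edge case below) then works.

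For (2), I would suppose $n \in \mathbb{N}_0(x)$, so that $X_n(x) = 0$ and $r_n/s_n = \Phi({}^t\!A_0; r_{n-1}/s_{n-1})$, using the convention $r_0/s_0 := 0$ coming from the empty product. If $n \in B(x)$ we are done; otherwise $n \in A(x)$, and part (1) applied to $z = r_{n-1}/s_{n-1}$ forces $n-1 \in B(x)$, hence $n \in C(x) \subset D(x)$. Part (3) then follows from combining this inclusion with the observation that $n \mapsto n-1$ injects $C(x) \cap \{1,\dots,N\}$ into $B(x) \cap \{0,\dots,N-1\}$: one obtains $|\mathbb{N}_0(x) \cap \{1,\dots,N\}| \le |B(x) \cap \{1,\dots,N\}| + |C(x) \cap \{1,\dots,N\}| \le 2|B(x) \cap \{1,\dots,N\}| + 1$. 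Dividing by $N$, taking $\liminf$, and applying Lemma 3.2 yields the claim.

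For (4), the central fact I would exploit is $p_0(\gamma-2) = (\gamma-1)/(2(\gamma-1)) = 1/2$, so $s \circ p_0$ attains its maximum $\log 2$ precisely at $y = \gamma-2$. Since $p_0$ is strictly increasing on $(-\gamma,\infty)$ and $s$ is concave with maximum at $1/2$, the map $y \mapsto s(p_0(y))$ is unimodal with peak at $\gamma-2$. A short computation yields
\[ |p_0(\gamma-2-\epsilon_0) - 1/2| = \epsilon_0/(2(2\gamma-2-\epsilon_0)) > \epsilon_0/(2(2\gamma-2+\epsilon_0)) = |p_0(\gamma-2+\epsilon_0) - 1/2|, \]
so $s(p_0(\gamma-2-\epsilon_0)) < s(p_0(\gamma-2+\epsilon_0)) = e_0 < \log 2$. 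Unimodality then gives the uniform bound $s(p_0(y)) \le e_0$ for every $y \in [\alpha,\beta]$ with $|y-(\gamma-2)| > \epsilon_0$. Splitting the average according to $A(x)$ and $B(x)$ and using $s(\cdot) \le \log 2$ on the former and $s(p_0(\cdot)) \le e_0$ on the latter gives
\[ \frac{1}{N}\sum_{n=1}^N s(p_0(r_n(x)/s_n(x))) \le \log 2 - (\log 2 - e_0)\,\frac{|B(x) \cap \{1,\dots,N\}|}{N}, \]
and taking $\limsup$ together with (3) yields the claim.

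The hard part will be the asymmetry analysis in (4): one must verify that not only $y = \gamma-2+\epsilon_0$ but also $y = \gamma-2-\epsilon_0$ (and hence every $y \in [\alpha,\gamma-2-\epsilon_0]$) satisfies $s(p_0(y)) \le e_0$. This relies on the explicit linear fractional form of $p_0$ and the fact that $p_0(\gamma-2 \pm \epsilon_0)$ sits asymmetrically about $1/2$, with the leftward value strictly farther from $1/2$.
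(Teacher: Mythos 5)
Your proposal is correct and follows essentially the same route as the paper: the same contradiction argument via part (1) for the inclusion $\mathbb{N}_{0}(x)\subset D(x)$, the same injection $n\mapsto n-1$ for the counting bound, and the same split of the Ces\`aro average over $A(x)$ and $B(x)$. You additionally spell out two points the paper only asserts --- that the failure of (i) is exactly $\Phi({}^t\!A_{0};\gamma-2)\neq\gamma-2$ together with the Lipschitz/reverse-triangle choice of $\epsilon_{0}$, and that $s(p_{0}(\gamma-2-\epsilon_{0}))<s(p_{0}(\gamma-2+\epsilon_{0}))=e_{0}$ (which is where the hypothesis $\epsilon_{0}<2(\gamma-1)$ is actually used) --- so the proposal is, if anything, more complete than the original.
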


\begin{proof}
(1) This is a direct consequence of the assumption that the condition (i) in Theorem 1.2 fails, that is, $\Phi({}^t \! A_{0}; \gamma-2) \neq \gamma-2$.

(2) It is sufficient to show that $\mathbb{N} \setminus D(x) \subset \mathbb{N}_{1}(x)$.
We see that $\mathbb{N} \setminus D(x) = A(x) \cap (\mathbb{N} \setminus C(x)) = \{n \in A(x) : n-1 \in A(x)\}$. 
We assume that there exists $n \in \mathbb{N} \setminus D(x)$ such that $n \in \mathbb{N}_{0}(x)$.
Since $n-1 \in A(x)$, we have that $|r_{n-1}(x)/s_{n-1}(x) - (\gamma-2)| \leq \epsilon_{0}$.
Since $n \in \mathbb{N}_{0}(x)$, $r_{n}(x)/s_{n}(x) = \Phi({}^t \! A_{0}; r_{n-1}(x)/s_{n-1}(x))$.
By the assertion (1), 
we see that $|r_{n}(x)/s_{n}(x) - (\gamma-2)| > \epsilon_{0}$.
But this is contradict to $n \in A(x)$.

(3) 
By the assertion (2), we see that $|\mathbb{N}_{0}(x) \cap \{1, \dots,  N \}| \leq |D(x) \cap \{1, \dots, N\}|$.
We have that $|C(x) \cap \{1, \dots, N\}| \leq |B(x) \cap \{1, \dots, N\}|$ for any $N \geq 1$,
by the injectivity of the map $h : C(x) \to B(x)$ given by $h(n) = n-1$.
Then we see that $|D(x) \cap \{1, \dots, N\}| \leq 2 |B(x) \cap \{1, \dots, N\}|$, and then, $|\mathbb{N}_{0}(x) \cap \{1, \dots,  N \}| \leq 2 |B(x) \cap \{1, \dots, N\}|$, 
for any $N \geq 1$.

By Lemma 3.2, 
\[ \liminf_{N \to \infty} \frac{|B(x) \cap \{1, \dots, N\}|}{N} \geq \frac{p_{0}(\alpha)}{2}, \mu_{f}\textrm{-}a.s.x.\]
Thus we obtain the assertion (3).

(4) 
By noting the definition of $B(x)$, 
we see that \\
$s(p_{0}(r_{n}(x)/s_{n}(x))) < \max \left\{s(p_{0}(\gamma-2-\epsilon_{0})), s(p_{0}(\gamma-2+\epsilon_{0}))\right\} = e_{0}$ for any $x \in [0,1)$ and $n \in B(x)$.

Now we have that 
\[ \frac{1}{N} \sum_{n=1}^{N} s\left(p_{0}\left(\frac{r_{n}(x)}{s_{n}(x)}\right)\right) = \frac{1}{N} \left(\sum_{n \in A(x),  n \leq N}  + \sum_{n \in B(x),  n \leq N}\right) s\left(p_{0}\left(\frac{r_{n}(x)}{s_{n}(x)}\right)\right). \]

Let $\xi_{N}(x) = |B(x) \cap \{1, \dots, N\}|/N$. Then, 
by noting that $s(p_{0}(r_{n}(x)/s_{n}(x))) \leq \log 2$, we see that 
\[\frac{1}{N} \sum_{n \in A(x),  n \leq N} s\left(p_{0}\left(\frac{r_{n}(x)}{s_{n}(x)}\right)\right) \leq \frac{|A(x) \cap \{1, \dots, N\}|}{N}\log 2 = (1-\xi_{N}(x)) \log 2.\]
Now we have that 
\[ \frac{1}{N} \sum_{n \in B(x), n \leq  N} s\left(p_{0}\left(\frac{r_{n}(x)}{s_{n}(x)}\right)\right) \leq \xi_{N}(x)e_{0}.\]
By noting that $e_{0} < \log2$, 
we see that  
\[ \limsup_{N \to \infty} \left((1-\xi_{N}(x))\log2 + \xi_{N}(x)e_{0}\right) \leq \log 2- (\log2-e_{0}) \liminf_{N \to \infty} \xi_{N}(x).\]
By the assertion (3), we see that $\liminf_{N \to \infty} \xi_{N}(x) \geq p_{0}(\alpha)/2 > 0$ for $\mu_{f}$-a.s.$x$.
Thus we obtain the assertion (4).
\end{proof}

Now we show Theorem 1.2 (1).
We remark that $\Phi(cA;z) = \Phi(A;z)$ for any constant $c > 0$ and  the conditions (A1) - (A3) remain valid for $(cA_{0}, cA_{1})$.
Then, we can assume that $d_{0} = 1$ and $b_{1} = 1$. 

By computation, we see that  \[  A_{0} = \begin{pmatrix} 1/2 & 0  \\ c_{0} & 1 \end{pmatrix}, \, \, \,  A_{1} = \begin{pmatrix}  4c_{0} + 1 &  1 \\  2c_{0} & 2(1 + c_{0}) \end{pmatrix}, \]
and $f(x) = \dfrac{x}{-2c_{0}x + 1 + 2c_{0}}$ satisfies the equation (1.1).
This completes the proof of Theorem 1.2 (1).

\vspace{1\baselineskip}

Now we show Theorem 1.2 (2).
We assume that the condition (i) fails.
Then, by  Lemma 2.3, we have that for $\mu_{f}$-a.s.$x$, 
\[ \limsup_{N \to +\infty} \frac{-\log R_{N}(x)}{N} = \limsup_{N \to \infty} \frac{L_{N}(x)}{N} =\limsup_{N \to \infty} \frac{1}{N} \sum_{n=1}^{N} s\left(p_{0}\left(\frac{r_{n}(x)}{s_{n}(x)}\right)\right). \] 
Then, by noting Lemma 3.3(4) and  Lemma 2.4(1), we obtain the desired result.

We can show the assertion in the same manner  if the condition (ii) fails.

These complete the proof of Theorem 1.2(2).

\section{A relationship with stationary measures}

In this section, we state a relationship between a certain class of de Rham's functional equations and stationary measures. 

We state a general setting.
Let $G$ be a semigroup and $\mu$ be a probability measure on $G$. 
Let $M$ be a topological space. 
We assume that $G$ acts on $M$ measurably, that is, there is a map from $(g,x) \in G \times M$ to $g \cdot x \in M$ satisfying the following conditions :  \\
(1) $(g_{1}g_{2}) \cdot x = g_{1} \cdot (g_{2} \cdot x)$ for any $g_{1}, g_{2} \in G$ and $x \in M$.\\
(2) $x \mapsto g \cdot x$ is measurable map on $M$ for any $g \in G$.

We say that a probability measure $\nu$ on $M$ is a $\mu$\textit{-stationary measure} if 
\begin{equation}
 \nu(B) = \int_{G} \nu(h^{-1}B) \mu(dh), \tag{4.1}
\end{equation} 
for any $B \in \mathcal{B}(M)$.
Furstenberg \cite{Fu} Lemma 1.2 showed that if $M$ is a compact metric space, then there exists a $\mu$-stationary measure.

Let \[ G = \left\{ \begin{pmatrix} a & b \\ c & d  \end{pmatrix} \in M(2; \mathbb{R}) : ad > bc, b \geq 0, d > 0,  0 < a + b \leq c + d \right\}, \]
and, $M = [0,1]$.
Then $G$ is a semigroup.  
We define a continuous action of $G$ to $M$ by $A\cdot z =  \Phi \left(A ; z\right)$.
For $(A_{0}, A_{1})$ satisfying (A1)-(A3), we see that $A_{0}, A_{1} \in G$. 
Let $\mu$ be a probability measure on $G$ such that $\mu(\{A_{0}\}) = \mu(\{A_{1}\}) = 1/2$. 
Then we have the following.

\begin{Lem}
$(1)$ For $k \geq 1$, 
\begin{equation*}
\begin{cases} A_{0}^{-1}(f(I_{k}(x))) = f(I_{k-1}(2x)),  \, \, A_{1}^{-1}(f(I_{k}(x))) = \emptyset  & x \in [0, 1/2) \\
 A_{0}^{-1}(f(I_{k}(x))) = \emptyset, \, \,  A_{1}^{-1}(f(I_{k}(x))) = f(I_{k-1}(2x-1))  & x \in [1/2, 1). 
\end{cases}
\end{equation*}\\
$(2)$ For any $\mu$-stationary measure $\nu$ and $k \geq 1$,\\ 
\begin{equation*}
\nu(f(I_{k}(x))) = \begin{cases} \nu(f(I_{k-1}(2x)))/2 & x \in [0, 1/2) \\
\nu(f(I_{k-1}(2x-1)))/2 & x \in [1/2, 1). 
\end{cases}
\end{equation*}\\
$(3)$ There exists exactly one $\mu$-stationary measure $\nu$.
\end{Lem}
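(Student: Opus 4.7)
My plan for part (1) is to unwind the functional equation (1.1) and use injectivity of $F_0, F_1$. For $x \in [0, 1/2)$, write $I_k(x) = [m/2^k, (m+1)/2^k)$ with $m = [2^k x] < 2^{k-1}$; then every $y \in I_k(x)$ satisfies $y < 1/2$, so by (1.1) we have $f(y) = F_0(f(2y))$, and since $2 I_k(x) = I_{k-1}(2x)$ this gives $f(I_k(x)) = F_0(f(I_{k-1}(2x))) = A_0 \cdot f(I_{k-1}(2x))$. Because $F_0 = \Phi(A_0;\cdot)$ is strictly increasing on $[0,1]$, taking $A_0$-preimages recovers $f(I_{k-1}(2x))$ exactly. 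For the $A_1$-preimage, observe that $f(I_k(x))$ is a half-open subset of $[0, f(1/2)) = [0, F_1(0))$, while $A_1 \cdot [0, 1] = F_1([0,1]) = [F_1(0), 1]$; the two sets are disjoint, so $A_1^{-1}(f(I_k(x))) = \emptyset$. The case $x \in [1/2, 1)$ is symmetric.

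Part (2) is then a direct substitution. Since $\mu(\{A_0\}) = \mu(\{A_1\}) = 1/2$, the stationarity identity (4.1) becomes
\[ \nu(B) = \tfrac{1}{2}\nu(A_0^{-1}B) + \tfrac{1}{2}\nu(A_1^{-1}B); \]
plugging $B = f(I_k(x))$ and invoking (1) gives the claimed formula, since exactly one of the two preimages is nonempty.

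For part (3), existence is immediate from Furstenberg's lemma \cite{Fu}, since $M = [0,1]$ is a compact metric space on which $G$ acts continuously. For uniqueness, my plan is to iterate (2) $n$ times to obtain $\nu(f(I_n(x))) = 2^{-n} \nu(f(I_0(y)))$ for some $y$; since $I_0(y) = [0,1)$ and $f([0,1)) = [0, 1)$, this reduces to computing $\nu([0, 1))$. I would then show $\nu(\{1\}) = 0$ by plugging $B = \{1\}$ into (4.1): because $F_0([0,1]) \subset [0, F_0(1)]$ with $F_0(1) < 1$ we have $A_0^{-1}\{1\} = \emptyset$, whereas $F_1(z) = 1$ forces $z = 1$, yielding $\nu(\{1\}) = \tfrac{1}{2}\nu(\{1\})$; symmetrically $\nu(\{0\}) = 0$. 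Thus $\nu(f(I_n(x))) = 2^{-n}$ on every dyadic image interval. Since $f$ is a strictly increasing continuous bijection of $[0,1]$, the family $\{f(I_n(x)) : n \geq 1,\ x \in [0,1)\}$ is a $\pi$-system generating $\mathcal{B}([0,1])$, so Dynkin's lemma pins down $\nu$ uniquely.

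The main obstacle is bookkeeping at the boundary: the ``empty preimage'' statements in (1) rely on the half-open convention for $I_k(x)$ so that the common endpoint $F_0(1) = F_1(0)$ is genuinely excluded, and the uniqueness argument requires the separate verification that $\nu$ puts no mass on the endpoints $0, 1$. Neither should cause serious trouble given the explicit structure of $F_0, F_1$.
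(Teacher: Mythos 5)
Your proposal is correct and follows essentially the same route as the paper: unwind (1.1) one step to identify $f(I_k(x))$ as $F_{X_1(x)}$ applied to the next-level image interval, use disjointness of $F_0([0,1))$ and $F_1([0,1))$ for the empty preimages, substitute into (4.1) for part (2), and iterate to show any two stationary measures agree on the dyadic image intervals, which determine a Borel measure. Your explicit verification that $\nu(\{1\})=0$ (so that $\nu(f(I_k(x)))=2^{-k}$ is actually pinned down) is a normalization step the paper's proof passes over silently, and is a worthwhile addition.
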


\begin{proof}
(1) By Lemma 2.1(1), we see that \\
$f(I_{k}(x))= \Phi(A_{X_{1}(x)} \cdots A_{X_{k}(x)};  [0,1)) = \Phi(A_{X_{1}(x)} ; \Phi(A_{X_{2}(x)} \cdots A_{X_{k}(x)};  [0,1)))$.
We see that $f(I_{k-1}(2x)) = \Phi(A_{X_{2}(x)} \cdots A_{X_{k}(x)};  [0,1)) = A_{0}^{-1}(f(I_{k}(x)))$, $x \in [0, 1/2)$,
 and, $f(I_{k-1}(2x-1)) = \Phi(A_{X_{2}(x)} \cdots A_{X_{k}(x)};  [0,1)) = A_{1}^{-1}(f(I_{k}(x)))$, $x \in [1/2, 1)$.
Since $\Phi(A_{0} ; [0,1)) \cap \Phi(A_{1} ; [0,1)) = \emptyset$, 
$A_{1}^{-1}(f(I_{k}(x))) = \emptyset$, $x \in [0, 1/2)$,  and,  
$A_{0}^{-1}(f(I_{k}(x))) = \emptyset$, $x \in [1/2, 1)$.
Thus we have the assertion (1).

(2) By noting the assertion (1) and (4.1), we obtain the desired result.

(3) Let $\nu_{i}$, $i=0,1$,  be two $\mu$-stationary measures.
By the assertion (2), 
we see that $\nu_{0}(f(I_{k}(x))) = \nu_{1}(f(I_{k}(x)))$ for $k \geq 1$, $x \in [0,1)$.
Let \\
$\mathcal{C} = \left\{f(\sum_{i=1}^{k} 2^{-j} X_{j}(x)) : k \geq 1, x \in [0,1)\right\} = \left\{f(l/2^{k}) : 0 \leq l \leq 2^{k-1}, k \geq 1\right\}$.
Then, we have that $\nu_{0}([a,b)) = \nu_{1}([a,b))$ for $a, b \in \mathcal{C}$.
Since $f$ is continuous on $[0,1]$, 
$\mathcal{C}$ is dense in $[0,1]$.
Thus we see that $\nu_{0} = \nu_{1}$.
\end{proof}

\begin{Lem}
Let $g : [0,1] \to [0,1]$ be the inverse function of the solution $f$ of $(1.1)$. 
Then, \\
$(1)$ $g$ is continuous and strictly increasing. 
Hence, $\mu_{g}$ is well-defined.\\
$(2)$ $\mu_{f}$ is singular if and only if  $\mu_{g}$ is so.
\end{Lem}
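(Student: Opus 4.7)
For part (1), the plan is to observe that $f$, as de Rham's solution of (1.1) under the conditions imposed on $F_{0}, F_{1}$, is a continuous strictly increasing bijection of $[0,1]$ with $f(0)=0$ and $f(1)=1$. Such a map is a homeomorphism of $[0,1]$, so the inverse $g = f^{-1}$ is also continuous and strictly increasing, with $g(0)=0$ and $g(1)=1$. In particular $g$ is the distribution function of a well-defined Borel probability measure on $[0,1]$, which we denote by $\mu_{g}$.

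For part (2), my approach is to realize $\mu_{f}$ and $\mu_{g}$ as pushforwards of Lebesgue measure $m$ on $[0,1]$ under $g$ and $f$ respectively, and then transport a singularity-witness from one to the other. Concretely, if $U$ is uniform on $[0,1]$ then $P(g(U) \le x) = P(U \le f(x)) = f(x)$, so $\mu_{f} = g_{*}m$; by symmetry $\mu_{g} = f_{*}m$. Since $f$ and $g$ are homeomorphisms of $[0,1]$, they map Borel sets to Borel sets, and for any Borel $A \subset [0,1]$ we have
\[
\mu_{f}(A) = m(f(A)), \qquad \mu_{g}(A) = m(g(A)).
\]

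With these two formulas in hand the equivalence of singularities is essentially a bookkeeping step. Suppose $\mu_{f}$ is singular; pick a Borel set $N$ with $\mu_{f}(N)=1$ and $m(N)=0$, and set $N^{*} = f(N)$. Then $m(N^{*}) = \mu_{f}(N) = 1$, so $[0,1]\setminus N^{*}$ has Lebesgue measure $0$, while $\mu_{g}(N^{*}) = m(g(N^{*})) = m(N) = 0$, forcing $\mu_{g}([0,1]\setminus N^{*}) = 1$. Hence $\mu_{g}$ is singular. The reverse implication is obtained by interchanging the roles of $f$ and $g$. I do not expect any serious obstacle here; the only minor care-point is justifying $\mu_{f}(A) = m(f(A))$ for all Borel $A$ (not just intervals), which follows since both sides, viewed as measures in $A$, agree on the $\pi$-system of subintervals of $[0,1]$ and hence on all Borel sets by a standard uniqueness argument.
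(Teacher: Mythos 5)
Your proposal is correct and follows essentially the same route as the paper: the identity $\mu_{f}(A)=m(f(A))$ is just the paper's $l(B)=\mu_{f}(f^{-1}(B))$ written with $B=f(A)$, and transporting the singularity witness by $f$ (resp.\ $g$) is exactly the paper's argument with $B_{0}$ and $g^{-1}(B_{0})=f(B_{0})$. The $\pi$-system extension from intervals to all Borel sets is the same justification the paper invokes implicitly.
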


\begin{proof}
(1) Noting that $f$ is continuous and strictly increasing on $[0,1]$, $f(0) = 0$ and $f(1) = 1$, 
we obtain the desired result.

(2) Since $l([a,b)) = \mu_{f}\left(f^{-1}([a,b))\right) =  \mu_{g}\left(g^{-1}([a,b))\right)$ for $0 \leq a \leq b \leq 1$, we see that 
$l(B) = \mu_{f}\left(f^{-1}(B)\right) =  \mu_{g}\left(g^{-1}(B)\right)$ for any Borel set $B$.

We assume that $\mu_{f}$ is singular.
Then, there exists a Borel set $B_{0}$ such that $\mu_{f}(B_{0}) = 0$ and $l(B_{0}) = 1$.
Then, $\mu_{g}\left(g^{-1}(B_{0})\right) = 1$ and $l\left(g^{-1}(B_{0})\right) = \mu_{f}\left(f^{-1}(g^{-1}(B_{0}))\right) = \mu_{f}(B_{0}) = 0$. 
Thus we see that $\mu_{g}$ is singular.

We assume that $\mu_{g}$ is singular.
Then, we see that $\mu_{f}$ is singular in the same manner as in the above argument.
\end{proof}

The following theorem gives a necessary and sufficient condition for the regularity of the stationary measure in this setting.
\begin{Thm}
Let the conditions $(i)$ and $(ii)$ as in Theorem 1.2 and $\nu$ be a unique $\mu$-stationary measure.
Then, we have \\
$(1)$ $\nu$ is absolutely continuous if and only if both $(i)$ and $(ii)$ hold. \\
$(2)$ $\nu$ is singular  if and only if either $(i)$ or $(ii)$ fails.
\end{Thm}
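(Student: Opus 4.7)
The plan is to identify $\nu$ with the measure $\mu_{g}$ associated to the inverse $g = f^{-1}$, and then to reduce both claims to Theorem 1.2 via Lemma 4.2.

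First I would iterate Lemma 4.1(2): for every $k \geq 1$ and $x \in [0,1)$, one obtains $\nu(f(I_{k}(x))) = 2^{-k}$ (with the base step using $\nu(\{1\}) = 0$, which follows from (4.1) applied to $\{1\}$ since $A_{0}$ does not map any point of $[0,1]$ to $1$ while $A_{1}$ fixes it). On the other hand, $\mu_{g}$ is exactly the pushforward of Lebesgue measure by $f$, since $\mu_{g}([0,y]) = g(y) = l(f^{-1}([0,y]))$; hence $\mu_{g}(f(I_{k}(x))) = l(I_{k}(x)) = 2^{-k}$. Because $f$ is a homeomorphism of $[0,1]$, the family $\{f(I_{k}(x)) : k \geq 1, x \in [0,1)\}$ generates $\mathcal{B}([0,1])$, so $\nu = \mu_{g}$.

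For part (2), Lemma 4.2(2) says that $\mu_{g}$ is singular iff $\mu_{f}$ is singular, and Theorem 1.2(2) says this happens iff either (i) or (ii) fails. Combining, $\nu = \mu_{g}$ is singular iff either (i) or (ii) fails.

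For part (1), when both (i) and (ii) hold, the proof of Theorem 1.2(1) supplies the explicit formula $f(x) = x/(-2c_{0}x + 1 + 2c_{0})$ (after normalizing $d_{0} = 1$), which is a $C^{\infty}$ diffeomorphism of $[0,1]$ with strictly positive derivative, the denominator being positive on $[0,1]$ by (A3). Hence $g$ is likewise a $C^{\infty}$ diffeomorphism with positive derivative, so $\mu_{g} = \nu$ has a smooth positive density with respect to Lebesgue measure and is absolutely continuous. The converse direction of (1) is immediate from (2): if either condition fails, $\nu$ is singular and hence not absolutely continuous. The main technical step in the whole argument is the identification $\nu = \mu_{g}$; once it is in hand, (1) and (2) are direct consequences of Theorem 1.2 and Lemma 4.2.
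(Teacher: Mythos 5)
Your proof is correct and follows essentially the same route as the paper: identify $\nu$ with $\mu_{g}$ (for $g=f^{-1}$) and reduce both parts to Theorem 1.2 via Lemma 4.2(2), with the ``only if'' directions following from the singular/absolutely-continuous dichotomy. The only (harmless) difference is how the identification $\nu=\mu_{g}$ is made: the paper checks that $\mu_{g}$ satisfies (4.1) on the intervals $f(I_{k}(x))$ and invokes the uniqueness in Lemma 4.1(3), whereas you iterate Lemma 4.1(2) to get $\nu(f(I_{k}(x)))=2^{-k}$ (correctly handling $\nu(\{1\})=0$, a point the paper leaves implicit) and match this against $\mu_{g}(f(I_{k}(x)))=2^{-k}$ on a generating $\pi$-system.
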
 

\begin{proof}
It is sufficient to show ``if" parts.

(1) By noting Theorem 1.2(1),  we have that $f(x) = x/(-2c_{0}x+2c_{0}+1)$ and then $g(y) = (2c_{0}+1)y/(2c_{0}y+1)$.
By Lemma 4.2(2),  we have that $\mu_{g}$ is absolutely continuous and obtain the assertion (1).

(2)  We see that $\mu_{g}(f(I_{k}(x))) = \mu_{g}(g^{-1}(I_{k}(x))) = 2^{-k}$, $x \in [0,1)$, $k \geq 1$.
By Lemma 4.1(1), \\
$\mu_{g}\left(f(I_{k}(x))\right)  = \dfrac{1}{2} \left(\mu_{g}\left(A_{0}^{-1}(f(I_{k}(x)))\right) + \mu_{g}\left(A_{1}^{-1}(f(I_{k}(x)))\right)\right)$, $x \in [0,1)$, $k \geq 1$.
Then we see that (4.1) holds for $[a, b)$, $a, b \in \mathcal{C}$ and that $\mu_{g}$ is a $\mu$-stationary measure.
By noting Theorem 1.2(2), we have that $\mu_{f}$ is singular.
By Lemma 4.2(2),  we have that $\mu_{g}$ is singular and obtain the assertion (2).
\end{proof}


\section{Examples and Remarks}
The following example concerns Lebesgue's singular functions. 
\begin{Exa}
Let us define $2 \times 2$ real matrices $A_{p,0},  A_{p,1}$, $p \in (0,1)$,   by 
\[  A_{p,0} = \begin{pmatrix} p & 0  \\ 0 & 1 \end{pmatrix}, \, \,  A_{p,1} = \begin{pmatrix} 1 - p & p\\ 0 & 1 \end{pmatrix}.\] 
Then, $(A_{0}, A_{1}) = (A_{p,0}, A_{p,1})$ satisfies the conditions (A1)-(A3). 

Let $f_{p}$ be the solution of (1.1) for $(A_{0}, A_{1}) = (A_{p,0}, A_{p,1})$.
By the main theorems,  
we immediately have the following.\\
(1) $\mu_{f_{p}}$ is absolutely continuous  if $p = 1/2$, and $\mu_{f_{p}}$ is singular if $p \neq 1/2$. \\
(2) There exists a Borel set $K_{p}$ such that $\mu_{f_{p}}(K_{p}) = 1$ and $\dim_{H}(K_{p}) \leq s(p) / \log 2$. \\
(3) $\mu_{f_{p}}(K) = 0$ for any Borel set $K$ with $\dim_{H}(K) < s(p) / \log 2$.
\end{Exa}

The following example concerns the range of self-interacting walks on an interval in the author \cite{O1}.
\begin{Exa}
Let $x_{u} = 2/(1+\sqrt{1+8u^{2}})$, $u \geq 0$.
Let $\tilde A_{u, i}$, $i=0,1$, be two $2 \times 2$ real matrices given by 
\[ \tilde A_{u, 0} = \begin{pmatrix} x_{u} & 0  \\ -u^{2}x_{u}^{2} & 1 \end{pmatrix}, \, \tilde A_{u, 1} = \begin{pmatrix} 0 & x_{u} \\ -u^{2}x_{u}^{2} & 1 - u^{2}x_{u}^{2} \end{pmatrix}, \, u \geq 0. \]

Let $0 < u < \sqrt{3}$.
Then $(A_{0}, A_{1}) = (\tilde A_{u,0}, \tilde A_{u,1})$ satisfies the conditions (A1)-(A3). 
Let $g_{u}$ be the  solution of (1.1) for $(A_{0}, A_{1}) = (\tilde A_{u,0}, \tilde A_{u,1})$.
We remark that $\gamma = (1-u^{2}x_{u}^{2})/x_{u} = (1+x_{u})/2x_{u}$. 
By the definition of $x_{u}$, we see that each of the conditions  in Theorem 1.2 is equivalent to $x_{u} \neq 1/2$, that is, $u \neq 1$.
Then, by Theorem 1.2, we have that $\mu_{g_{u}}$ is singular for $0 < u < \sqrt{3}$ and $u \neq 1$,
and absolutely continuous for $u = 1$.

Let $0 < u < 1$.
Then we have that $x_{u} > 1/2$, $\alpha = \min \{0, -1/2, -u^{2}x_{u}\} = -1/2$, $\beta = 0$ and $\gamma < 3/2$.
Hence we see that $\gamma-2<\alpha$, in particular, $\gamma-2 \notin [\alpha, \beta]$.
By Theorem 1.1, we see that there exists a Borel set $\tilde K_{u}$ such that $\dim_{H}(\tilde K_{u}) \leq s(p_{0}(\alpha))/\log 2 = s(x_{u})/\log 2$ and $\mu_{g_{u}}(\tilde K_{u}) = 1$ 
and that $\mu_{g_{u}}(K) = 0$ for any Borel set $K$ with $\dim_{H}(K) < s(p_{0}(\beta))/\log 2 = s(2x_{u}/(1+x_{u}))/\log 2$.
\end{Exa}

\begin{Rem}
(1) Pincus \cite{P1}, \cite{P2} obtained results similar to Theorem 4.3. 
Hata \cite{H} Corollary 7.4 showed the singularity of the solution of (1.1) under the assumptions similar to the ones in \cite{P2} Theorem 2.1.\\
(2) Let $T : [0,1) \to [0,1)$ be given by $T(x) = 2x \mod 1$. 
Then, by computation, 
\[ \mu_{f} \left(T^{-1} (A)\right) = \int_{A} \left(\frac{d \Phi(A_{0}; \cdot)}{dz}\left(f(y)\right) + \frac{d \Phi(A_{1}; \cdot)}{dz}\left(f(y)\right)\right) \mu_{f}(dy), \, A \in \mathcal{B}([0,1)).\]
We see that $T$ is a non-singular transformation  on $[0,1)$ with respect to $\mu_{f}$, that is, $\mu_{f} \circ T^{-1} \ll \mu_{f}$ and $\mu_{f} \ll \mu_{f} \circ T^{-1}$.
We remark that $\mu_{f}$ is \textit{not} invariant with respect to $T$ in some cases.

\end{Rem}

\end{document}